\definecolor{blue}{rgb}{0,0,1}
\definecolor{red}{rgb}{1,0,.2}
\numberwithin{equation}{section}
\theoremstyle{plain}
\newtheorem{theorem}{Theorem}[section]
\newtheorem{thm}[theorem]{Theorem}
\newtheorem{lemma}[theorem]{Lemma}
\newtheorem{corollary}[theorem]{Corollary}
\newtheorem{prop}[theorem]{Proposition}
\newtheorem{conjecture}[theorem]{Conjecture}
\theoremstyle{definition}
\newtheorem{definition}[theorem]{Definition}
\newtheorem{example}[theorem]{Example}
\theoremstyle{remark}
\newtheorem{remark}[theorem]{Remark}
\newtheorem{case[theorem]}{Case}
\def\RR{\mathbb{R}}
\def\ZZ{\mathbb{Z}}
\def\CC{\mathbb{C}}
\def\NN{\mathbb{Z}_{> 0}}
\def\EE{\mathbb{E}}
\def\PP{\mathbb{P}}
\def\supp{\mathrm{supp}}
\def\diam{\mathrm{diam}}
\newcommand{\rbr}[1]{\left( {#1} \right)}
\newcommand{\cbr}[1]{\left\{ {#1} \right\}}
\def\one{\mathbf{1}}
\numberwithin{equation}{section}
\newcommand{\abs}[1]{\lvert#1\rvert}
\begin{document}

\title[Measure and Dimension of Sums and Products]{Measure and Dimension of Sums and Products}

\author{Kyle Hambrook}
\address{San Jose State University, One Washington Square, San Jose, CA 95192}

\author{Krystal Taylor}
\address{The Ohio State University, Columbus, OH 43210}
\thanks{We thank Alex Iosevich and Izabella {\L}aba for their helpful comments.}
\keywords{Hausdorff dimension, Fourier dimension, Minkowski sum, Minkowski product, fractals}

\subjclass[2010]{Primary 28A78, 28A80, 42A38, 42B10}

\date{}

\begin{abstract}
We investigate the Lebesgue measure, Hausdorff dimension, and Fourier dimension of sets of the form 
 $RY + Z, $ where $R \subseteq (0,\infty)$ and $Y, Z \subseteq \RR^d$. 
We prove a theorem on the Lebesgue measure and Hausdorff dimension of $RY+Z$; 
The theorem is a generalized variant of some theorems of Wolff and Oberlin in which $Y$ is the unit sphere, 
but its proof is much simpler.  
We also prove a deeper existence theorem: For each $\alpha \in [0,1]$ and for each non-empty compact set $R \subseteq (0,\infty)$, 
there exists a compact set $Y \subseteq [1,2]$ such that $\dim_F(Y) = \dim_H(Y) = \overline{\dim_M}(Y) = \alpha$ 
 and $\dim_F(RY) \geq \min\{ 1, \dim_F(R) + \dim_F(Y)\}$.  
 This theorem verifies a weak form of a more general conjecture, and it can be used to produce new Salem sets from old ones.  
\end{abstract}

\maketitle

\section{Introduction}

In this paper, we study the Lebesgue measure, Hausdorff dimension, and Fourier dimension of sets in $\RR^d$ of the form 
\begin{align*}
RY+Z 
= \bigcup_{(r,z) \in R \times Z} (rY+z)   
= \cbr{ry+z : r \in R, y \in Y, z \in Z}. 
\end{align*}
where $R \subseteq (0,\infty)$ and $Y,Z \subseteq \RR^d$ are non-empty sets. 

This problem has been considered before when $Y$ is a smooth surface with non-vanishing curvature; 
see \cite{bourgain-averages, marstrand-packing, mitsis-thesis, oberlin-2006, oberlin-2007, STmeasure, STinterior, stein-maximal-spherical-means, wolff-1997, wolff-smoothing},
In each of these references, the non-vanishing curvature assumption 
is an essential ingredient, as it implies $Y$ supports a Borel probability measure whose Fourier transform decays at $\infty$.  
Our aim is to understand what happens 
when $Y$ is an arbitrary set. 
Fourier decay of measures on $Y$ will turn out to play an important role. 

%
%

We assume throughout that the sets $R$, $Y$, and $Z$ 
are compact in order to guarantee that the set $RY+Z$ is Borel measurable. 
We use $\mathcal{L}_d(A)$, $\dim_H(A)$, $\dim_F(A)$, 
and $\overline{\dim}_M(A)$, 
respectively, to denote the $d$-dimensional Lebesgue measure, Hausdorff dimension, Fourier dimension, 
and upper Minkowski (or box-counting) dimension of a set $A \subseteq \RR^d$. 
Definitions and basic properties of these dimensions are given in Section \ref{dimensions}.  
The expression $a \lesssim b$ means $a \leq C b$ 
for some positive constant $C$ whose precise value is irrelevant in the context. 
The expression $a \approx b$ means $a \lesssim b$ 
and $b \lesssim a$.

\subsection{Main Results}\label{main-results}

This paper has two main results: Theorem \ref{main-thm-1} and Theorem \ref{non-uniform thm sets}. The first motivates the second. 

\begin{thm}\label{main-thm-1}
Let $R \subseteq (0,\infty)$ and $Y, Z \subseteq \RR^d$ be 
non-empty compact sets.  
Let $\delta$ be the maximum of $\dim_F(RY)+\dim_H(Z)$ and $\dim_H(RY)+\dim_F(Z)$. 
\begin{enumerate}[(a)]
\item If $\delta > d$, then $\mathcal{L}_d(RY+Z) > 0$
\item If $\delta \leq d$, then $\dim_{H}(RY+Z) \geq \delta$. 
\end{enumerate}
\end{thm}

This theorem makes precise the intuition that 
the Hausdorff dimension of $RY + Z$ will be at least the 
``total'' dimension of $R$, $Y$, and $Z$, 
and that $RY +Z$ will have positive Lebesgue measure 
whenever the ``total'' dimension exceeds the ambient dimension $d$. 
It is a generalized variant of Theorems of Wolff \cite{wolff-1997, wolff-smoothing} and Oberlin \cite{oberlin-2006,oberlin-2007}, 
which are restricted to the special case where $Y$ is the unit sphere in $\RR^d$. 
A thorough discussion of Theorem \ref{main-thm-1} is delayed 
until Section \ref{lebesgue results}. 
We turn now to motivate our second main result: Theorem \ref{non-uniform thm sets}. 


In light of Theorem \ref{main-thm-1}, lower bounds on $\dim_F(RY)$ are important. 
%
The following proposition gives two general bounds. Part (a) 
is immediate from the definition of Fourier dimension. Part (b) 
is essentially Theorem 7 of Bourgain \cite{bourgain-2010}. 
\begin{prop}\label{basic estimates} \hspace{1pt}
\begin{enumerate}[(a)]
\item If $R \subseteq \RR$ contains a non-zero point and $Y \subseteq \RR^d$, 
then $RY$ contains a 
dilate of $Y$, 
hence $\dim_F(RY) \geq \dim_F(Y)$. 
\item If $R$ and $Y$ are non-empty compact subsets of $(0,\infty)$,  
then 
$\dim_F(RY) \geq \dim_H(R) + \dim_H(Y) - 1$. 
\end{enumerate}
\end{prop}
%
%
%
One might hope for the following estimate, at least for some specific sets $Y$: 
\begin{align}\label{hope}
\dim_F(RY) \geq \min\cbr{d,\dim_F(R) + \dim_F(Y)}.   
\end{align}

%
%
As the following example shows, \eqref{hope} does not hold in general. 
\begin{example}
Any subset of a $(d-1)$-dimensional linear subspace of  
$\RR^d$ has Fourier dimension $0$ 
(see, e.g., \cite[p.41]{Mat15}, \cite[p.64]{wolff-book-2003}). 
Let $Y$ be such a subset. Assume also that $Y$ is compact. 
Let $R \subseteq \mathbb{R}$ be a compact set with $\dim_F(R)>0$. 
Then $RY$ is of the same form as $Y$, 
and so $\dim_F(RY) = \dim_F(Y) = 0 < \dim_F(R)$. 
\end{example}
%
However, if $Y$ equals $S_{d-1}$ (the $(d-1)$-dimensional unit sphere in $\RR^d$), 
then \eqref{hope} does hold: 
\begin{prop}\label{sphere thm sets}
For every non-empty compact set $R \subseteq (0,\infty)$,  
$$\dim_F(RS_{d-1}) \geq \dim_F(R) + \dim_F(S_{d-1}).$$
\end{prop}
Note that 
$\dim_F(S_{d-1}) = \dim_H(S_{d-1}) = \overline{\dim_M}(S_{d-1}) = d-1$ 
(see, e.g., \cite{Mat95, wolff-book-2003}). 
We wonder if there are other sets in $\RR^d$ like the sphere. 
More precisely, we wonder if the following conjecture is true. 

\begin{conjecture}\label{fractal conjecture sets}
For every $\alpha \in [0,d]$, there exists a compact set $Y \subseteq \RR^d$ such that 
$\dim_F (Y) =  \dim_H (Y) = \overline{\dim_M}(Y) = \alpha$ 
and such that,  
for every non-empty compact set $R \subseteq (0,\infty)$, 
\eqref{hope} holds. 
\end{conjecture}

We are not able to prove this conjecture. 
However, if we restrict ourselves to dimension $d=1$ 
and allow $Y$ to depend on $R$, 
we are able to prove the following weaker version of the conjecture, 
which is the second main result of this paper. 
%
%
%
\begin{thm}\label{non-uniform thm sets}
For every $\alpha \in [0,1]$ and every non-empty compact set $R \subseteq (0,\infty)$, 
there exists a compact set $Y \subseteq [1,2]$ 
such that $\dim_F (Y) =  \dim_H (Y) = \overline{\dim_M}(Y) = \alpha$ 
and 
\eqref{hope} holds. 
\end{thm}
The proof of Theorem \ref{non-uniform thm sets} is inspired 
by a construction of Salem sets due to {\L}aba and Pramanik \cite{LP}. 
See also \cite{chen,  hambrook-thesis, HL2013, HL2016}, 
where generalizations of the {\L}aba-Pramanik construction 
were used 
to show the sharpness of fractal Fourier restriction theorems.

As an immediate consequence of Theorems \ref{main-thm-1} and \ref{non-uniform thm sets}, we have: 
\begin{corollary} 
Suppose $Y, Z \subseteq \mathbb{R}$ are compact sets such that $Y$ contains a non-zero point and $\dim_F(Y)+\dim_H(Z) > 0$. 
Let $\alpha \in [0,1]$ such that $\dim_F(Y)+\dim_H(Z) > 1-\alpha$.  
Then there exists a compact set $R \subseteq (0,\infty)$ such that $\dim_H (R) =  \dim_F (R) = \alpha$ and $RY+Z$ has positive Lebesgue measure.   
\end{corollary}


The proofs of Theorem \ref{main-thm-1}, Proposition \ref{sphere thm sets}, 
and Theorem \ref{non-uniform thm sets} are given in 
Sections \ref{main-thm-1 proof}, \ref{sphere thm sets proof}, and \ref{non-uniform thm sets proof}, respectively. 
The following construction will be used in all the proofs. 
\begin{definition}\label{measure defn}
Given a 
Borel measure $\mu$ on $\RR$ and 
a 
Borel measure $\nu$ on $\RR^d$, 
we define the Borel measure $\mu \cdot \nu$ on $\RR^d$ by 
\begin{align*}
\int_{\RR^d} f(z) d (\mu \cdot \nu)(z) 
= \int_{\RR^d} \int_{\RR} f(ry) d\mu(r) d\nu(y). 
\end{align*}
It is readily verified that 
$(\mu \cdot \nu)(\RR^d) = \mu(\RR)\nu(\RR^d)$ and $\supp(\mu \cdot \nu) = \supp(\mu)\supp(\nu)$. 
\end{definition}

\subsection{Dimensions}\label{dimensions}

In this section, we state the definitions and necessary properties of Hausdorff, Fourier, and upper Minkowski dimension. 
For more details, see \cite{falconer-book-fractal-geometry, Mat95, wolff-book-2003}. 
The support of a Borel measure $\mu$ on $\RR^d$, 
denoted $\supp(\mu)$, 
is 
the smallest closed set 
$F$ with $\mu(\RR^d \setminus F) = 0$. 
For each $A \subseteq \RR^d$, let $\mathcal{M}(A)$ 
be the set of all non-zero finite Borel measures 
on $\RR^d$ with compact support contained in $A$. 
For every $\mu \in \mathcal{M}(\RR^d)$ and $0 < s < d$, 
the $s$-energy of $\mu$ is 
\begin{align*}
I_s(\mu) = \int_{\RR^d} \int_{\RR^d} |x-y|^{-s} d\mu(x) d\mu(y) = c(d,s) \int_{\RR^d} |\widehat{\mu}(\xi)|^2 |\xi|^{s-d} d\xi, 
\end{align*}
where $c(d,s) = \pi^{s-d/2} \Gamma((d-s)/2) / \Gamma(s/2)$. 
There are numerous equivalent definitions of Hausdorff dimension. 
For our purposes, the following is most convenient: 
The Hausdorff dimension of a Borel set $A \subseteq \RR^d$ is 
\begin{align}\label{capacity dimension}
\dim_H(A) 
= \sup\cbr{ 0 < s < d : I_s(\mu) < \infty 
\text{ for some } \mu \in \mathcal{M}(A) }. 
\end{align}
The Fourier dimension of a Borel set $A \subseteq \RR^d$ is 
\begin{align}\label{fourier dimension} 
\dim_F(A) 
= 
\sup\cbr{0 < s < d : 
\sup_{\xi \in \RR^d} |\widehat{\mu}(\xi)|^2 |\xi|^s 
< \infty \text{ for some } 
\mu \in \mathcal{M}(A)}. 
\end{align} 
The Fourier dimension of a measure $\mu \in \mathcal{M}(\RR^d)$ is
\begin{align}\label{fourier dimension measure}
\dim_F(\mu) = \sup\cbr{ 0 < s < d: \sup_{\xi \in \RR^d} |\widehat{\mu}(\xi)|^2 |\xi|^s}. 
\end{align}
%
%
%
%
The upper Minkowski dimension of a non-empty bounded set $A \subseteq \RR^d$ is
\begin{align}\label{upper Minkowski dimension}
\overline{\dim_M}(A) 
= \limsup_{\epsilon \to 0^+} 
\frac{\log N(A,\epsilon)}{\log(1 / \epsilon)}. 
\end{align}
where 
$N(A,\epsilon)$ is the smallest number of $\epsilon$-balls needed to cover $A$. 

%
\begin{lemma}\label{dimension properties}
Let $A$ and $B$ be non-empty bounded Borel subsets of $\RR^d$. 
\begin{enumerate}[(a)]
\item If $f:A \to \RR^n$ is a Lipschitz map, then 
$\dim_H(f(A)) \leq \dim_H(A)$. 
\item $\dim_F(A) \leq \dim_H(A) \leq \overline{\dim}_{M}(A)$. 
\item $\dim_H(A) + \dim_H(B) \leq \dim_H(A \times B) \leq \dim_H(A) + \overline{\dim_M}(B)$.  
\item If there is a $\mu \in \mathcal{M}(A)$ 
and positive numbers 
$r_0$ and $s$ 
such that 
\begin{align*}
\mu(B(x,r)) \approx r^s \quad \text{ for all } x \in A, \,\, 0 < r \leq r_0, 
\end{align*}
then $\dim_H(A) = \overline{\dim_M}(A) = s$. 
\end{enumerate}
\end{lemma}

\subsection{ \texorpdfstring{Reverse of \eqref{hope} and Salem Sets}{Reverse of () and Salem Sets}    } \label{equality}

In this section, we find sufficient conditions for the reverse of inequality \eqref{hope} and for $RY$ to be a Salem set. 
A Borel set $A \subseteq \RR^d$ is called Salem if 
$\dim_F(A) = \dim_H(A)$. 
See \cite{LP}, \cite{mattila-1987}, \cite{Mat15}, \cite{mock} 
for some illustrations of the construction of Salem sets and their usefulness in harmonic analysis.

Let $R \subseteq \RR$ and $Y \subseteq \RR^d$ be non-empty compact sets.  
By the definition of Fourier dimension, $\dim_F(RY) \leq d$. 
By parts (a),(b),(c) of 
Lemma \ref{dimension properties} and because 
the map $f(x,y) = xy$ is Lipschitz on bounded sets, 
\begin{align*}
\dim_F(RY) 
\leq \dim_H(RY) 
\leq \dim_H(R \times Y)
\leq \dim_H(R) + \overline{\dim_M}(Y). 
\end{align*}

From this inequality, the following observations are immediate: 
\begin{enumerate}[(i)]
\item If $R$ is a Salem set and $Y$ satisfies $\dim_F(Y) = \dim_H(Y) = \overline{\dim_M}(Y)$, 
then the reverse of \eqref{hope} holds, i.e., 
$$
\dim_F(RY) \leq \min\cbr{d,\dim_F(R) + \dim_F(Y)}. 
$$
\item If, in addition to the hypotheses of (i), \eqref{hope} holds, then $RY$ is a Salem set. 
\end{enumerate}

The hypotheses of (i) and (ii) are satisfied 
if, for example, $Y$ is the unit sphere $S_{d-1}$ or a set furnished by Theorem \ref{non-uniform thm sets}, 
The construction of Salem sets is generally non-trivial; (ii) gives us a way to produce new Salem sets from old ones.

\subsection{Discussion of Theorem \ref{main-thm-1}}\label{lebesgue results}


Let $S_{d-1}$ denote the $(d-1)$-dimensional unit sphere in $\RR^d$. 
Theorem \ref{main-thm-1} is a generalized variant of 
the following theorems of Wolff and Oberlin.  

  


\begin{thm}[Wolff, Oberlin]\label{real wolff thm}
Let $K \subseteq (0,\infty) \times \RR^d$ 
be a non-empty compact set.  
If $\dim_H(K) > 1$, then 
$$
\mathcal{L}_d\rbr{\bigcup_{(r,z) \in K} (rS_{d-1}+z)} > 0. 
$$
\end{thm}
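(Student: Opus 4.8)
The plan is to construct on the set $W:=\bigcup_{(r,z)\in K}(rS+z)$ a nonzero finite Borel measure $\nu$ that is absolutely continuous with respect to Lebesgue measure; since $W$ is compact (a continuous image of the compact set $K\times S$), this forces $|W|>0$. First I would make several harmless reductions: decomposing $K$ into finitely many pieces according to dyadic ranges of the $r$-coordinate, rescaling each, and using that Hausdorff dimension is finitely stable, I may assume $K\subseteq[1,2]\times B(0,1)$, so all radii are comparable to $1$. Since $\dim_H(K)>1$, Frostman's lemma gives a Borel probability measure $\mu$ supported on $K$ and an exponent $s\in(1,\dim_H K)$ with $\mu(B(w,\rho))\lesssim\rho^{s}$ for all $w$ and all $\rho>0$; equivalently the energy $I_s(\mu)=\iint|w-w'|^{-s}\,d\mu(w)\,d\mu(w')$ is finite. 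Letting $\sigma$ denote surface measure on $S$, I would take $\nu$ to be the push-forward of $\mu\times\sigma$ under $(r,z,\omega)\mapsto z+r\omega$. Then $\nu$ is a nonzero finite measure with $\supp\nu\subseteq W$, and it remains to show that $\nu$ is absolutely continuous, for which it suffices to prove $\widehat{\nu}\in L^2(\RR^d)$.

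To do this I would work scale by scale. Writing $\widehat{\nu}(\xi)=\int_K\widehat{d\sigma}(r\xi)\,e^{-2\pi i z\cdot\xi}\,d\mu(r,z)$ and using $|\widehat{d\sigma}(\xi)|\lesssim(1+|\xi|)^{-(d-1)/2}$, the triangle inequality yields only $|\widehat{\nu}(\xi)|\lesssim(1+|\xi|)^{-(d-1)/2}$, which is never square-integrable over $\RR^d$; so the oscillation in the phases $e^{-2\pi i z\cdot\xi}$, together with the transversality between distinct spheres, must be exploited. Let $\{\beta_j\}_{j\ge0}$ be a Littlewood--Paley partition of unity with $\beta_j$ supported in $\{|\xi|\sim2^{j}\}$ for $j\ge1$, and set $\nu_j:=(\widehat{\nu}\,\beta_j)^{\vee}$. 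The piece $\nu_0$ lies in $L^2$ trivially, and it suffices to prove $\|\nu_j\|_{L^2(\RR^d)}\lesssim2^{-\delta j}$ for some fixed $\delta>0$. By Plancherel,
\begin{align*}
\|\nu_j\|_{L^2}^2=\iint_{K\times K}\left(\int_{\RR^d}\widehat{d\sigma}(r\xi)\,\overline{\widehat{d\sigma}(r'\xi)}\,|\beta_j(\xi)|^2\,e^{2\pi i(z-z')\cdot\xi}\,d\xi\right)d\mu(r,z)\,d\mu(r',z'),
\end{align*}
and the inner integral is, up to harmless factors, the overlap between two $2^{-j}$-thickened spheres of radii $r,r'\sim1$ centered at $z$ and $z'$.

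The heart of the matter --- and the step I expect to be the main obstacle --- is the resulting square-function / local-smoothing estimate
\begin{align*}
\int_{|\xi|\sim2^{j}}|\widehat{\nu}(\xi)|^2\,d\xi\;\lesssim\;2^{-\delta j}\,I_s(\mu),\qquad\delta=\delta(s,d)>0,
\end{align*}
valid whenever $s>1$. Its geometric content is that two $2^{-j}$-thickened spheres of comparable radii overlap in a set of small measure unless they are nearly internally or externally tangent --- i.e.\ unless $\bigl|\,|z-z'|-|r-r'|\,\bigr|$ or $\bigl|\,|z-z'|-(r+r')\,\bigr|$ is $\lesssim2^{-j}$ --- combined with the fact that an $s$-dimensional measure with $s>1$ cannot concentrate too strongly along near-tangent configurations. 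In the plane this is Wolff's circle-tangency estimate, which comes out of the local smoothing bounds of \cite{wolff-smoothing} (Corollary~3); for $d\ge3$ it is Oberlin's counterpart, obtained from Strichartz estimates for the wave equation (\cite{oberlin-2006}). This is the genuinely deep input, and the threshold $s>1$ it demands is sharp: for $K=R\times\{z_0\}$ with $\dim_H R=1$ and $|R|=0$, the set $W$ is a measure-zero union of concentric spheres. Granting the displayed estimate, summation over $j\ge0$ gives $\widehat{\nu}\in L^2(\RR^d)$, hence $\nu$ is absolutely continuous; since $\nu$ is a nonzero measure supported in $W$, we conclude $|W|>0$.
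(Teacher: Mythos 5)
The paper itself does not prove this theorem --- it is stated as a known result, attributed to Wolff ($d=2$; \cite{wolff-smoothing}, Corollary~3) and Oberlin ($d\ge3$; \cite{oberlin-2006}) --- so there is no in-paper proof to compare yours against. Your scaffolding is the standard and correct way to set the argument up: restrict to radii $r\sim1$; take a Frostman measure $\mu$ on $K$ with $I_s(\mu)<\infty$ for some $s>1$; push $\mu\times\sigma$ forward under $(r,z,\omega)\mapsto z+r\omega$ to get a nonzero finite measure $\nu$ on $W$; and reduce positivity of $|W|$ to $\widehat\nu\in L^2(\RR^d)$ via a Littlewood--Paley decomposition. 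All of that is sound, and the Plancherel rewriting of $\|\nu_j\|_{L^2}^2$ as a double integral in $(r,z,r',z')$ weighted by the overlap of two $2^{-j}$-annuli is exactly the right object to estimate.

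The issue is that the whole theorem is concentrated in the displayed bound
\[
\int_{|\xi|\sim2^{j}}|\widehat\nu(\xi)|^2\,d\xi\ \lesssim\ 2^{-\delta j}\,I_s(\mu),\qquad s>1,
\]
and you invoke it as a black box. You are candid that this is ``the genuinely deep input,'' which is accurate, but be aware of two things. First, Corollary~3 of \cite{wolff-smoothing} is not an auxiliary tangency lemma feeding into this estimate --- it \emph{is} the $d=2$ case of the theorem you are trying to prove; the ingredient one actually starts from is Wolff's $L^p$ local smoothing inequality (the main theorem of that paper), and deducing the weighted $L^2$ square-function bound above for a general measure with $I_s(\mu)<\infty$ from that $L^p$ estimate is itself nontrivial. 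The same applies in $d\ge3$: Oberlin's fractal Strichartz estimates do not literally come in the form you wrote, and the passage requires work. So what you have produced is a correct and well-organized reduction of the theorem to its hard analytic core, with that core cited rather than proved --- which, to be fair, is precisely what the paper under review also does.
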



\begin{thm}[Wolff]\label{haus wolff thm}
Let $K \subseteq (0,\infty) \times \RR^d$ be a non-empty compact set.   
Let $C \subseteq \RR^d$ be the set of centers of the spheres 
$\cbr{rS_{d-1}+z : (r,z) \in K}$, i.e., 
$C = \cbr{z: (r,z) \in K \text{ for some } r }$. 
If $\dim_H(K) \leq 1$ and $\dim_H( C ) \leq 1$, then 
\begin{align*}
\dim_H \rbr{ \bigcup_{(r,z) \in K} (rS_{d-1}+z) } 
\geq \dim_H(C) + d-1. 
\end{align*} 
\end{thm}

%
\begin{thm}[Oberlin]\label{haus oberlin thm} 
Let $K \subseteq (0,\infty) \times \RR^d$ 
be a non-empty compact set.  
If $\dim_H(K) \leq 1$ and $\dim_H(K) <  (d-1)/2$, 
then 
\begin{align*}
\dim_H \rbr{ \bigcup_{(r,z) \in K} (rS_{d-1}+z) } 
\geq
\dim_H(K) + d-1. 
\end{align*}
\end{thm}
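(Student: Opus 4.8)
The plan is to establish the lower bound on Hausdorff dimension by the standard potential-theoretic method, exhibiting a measure on the union of spheres as a superposition of scaled, translated copies of surface measure on $S$. Write $E=\bigcup_{(r,z)\in K}(rS+z)$; it is the image of the compact set $S\times K$ under the continuous map $(\omega,r,z)\mapsto r\omega+z$, hence compact, and since $K\subseteq(0,\infty)\times\RR^d$ is compact its $r$-coordinates lie in some $[r_1,r_2]$ with $0<r_1\le r_2<\infty$. It suffices to show that for every $s<\min\{d,\dim_H(K)+(d-1)\}$ there is a nonzero finite Borel measure $\mu$ with $\supp\mu\subseteq E$ and finite $s$-energy $I_s(\mu)=\iint|x-y|^{-s}\,d\mu(x)\,d\mu(y)<\infty$; by the energy criterion this gives $\dim_H(E)\ge s$, and letting $s$ increase yields $\dim_H(E)\ge\min\{d,\dim_H(K)+\dim_H(S)\}$, which is the assertion (a lower bound on dimension being automatically at most $d$). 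We may assume $s>d-1$, since $\dim_H(E)\ge\dim_H(S)=d-1$ trivially.

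Fix such an $s$ and put $a=s-(d-1)\in(0,1)$. Here the hypothesis enters: $\dim_H(K)<(d-1)/2$ forces $a<\dim_H(K)$ and $s<3(d-1)/2$. By Frostman's lemma choose $t$ with $a<t<\dim_H(K)$ and a nonzero finite measure $\lambda$ on $K$ with $\lambda(B(w,\rho))\le\rho^{t}$ for all $w,\rho$; then $I_a(\lambda)<\infty$. Let $\sigma$ be the normalized surface measure on $S$, and for $(r,z)$ let $\sigma_{r,z}$ be its push-forward under $x\mapsto rx+z$, a probability measure on $rS+z$. Set
\[
\mu=\int_K\sigma_{r,z}\,d\lambda(r,z),\qquad\text{i.e.}\qquad\int f\,d\mu=\iint f(rx+z)\,d\sigma(x)\,d\lambda(r,z).
\]
Then $\mu$ is a nonzero finite measure with $\supp\mu\subseteq E$, and by Tonelli
\[
I_s(\mu)=\iint_{K\times K}G(w,w')\,d\lambda(w)\,d\lambda(w'),\qquad G(w,w')=\iint_{S\times S}|rx+z-r'x'-z'|^{-s}\,d\sigma(x)\,d\sigma(x').
\]
Thus the whole problem reduces to the geometric estimate, with $\delta:=|w-w'|$,
\[
G(w,w')\ \lesssim\ 1+\delta^{-a}\qquad\text{for }w=(r,z),\ w'=(r',z')\text{ in the relevant compact set},
\]
valid whenever $d-1<s<\min\{d,3(d-1)/2\}$. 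Granting it, $I_s(\mu)\lesssim\lambda(K)^2+I_a(\lambda)<\infty$ and the proof is complete.

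To prove the estimate, write $G(w,w')=\int_{\RR^d}|y|^{-s}\,d\nu(y)$ where $\nu=\sigma_{r,z}\ast\widetilde{\sigma_{r',z'}}$ ($\widetilde\kappa(A):=\kappa(-A)$); by the layer-cake formula $G(w,w')\approx\int_0^{C}\rho^{-s-1}\nu(B(0,\rho))\,d\rho$, so one must bound $\nu(B(0,\rho))=(\sigma\times\sigma)\{(x,x'):|rx+z-r'x'-z'|<\rho\}$, the $\rho$-overlap of the spheres $\Sigma=rS+z$ and $\Sigma'=r'S+z'$. Fixing $x'$ first gives $\nu(B(0,\rho))\lesssim\rho^{d-1}$ (the admissible $x$ fill a cap of radius $\lesssim\rho$), and more precisely $\nu(B(0,\rho))\lesssim\rho^{d-1}\,O(\rho)$ where $O(\rho):=\sigma_{r,z}\{p\in\Sigma:\mathrm{dist}(p,\Sigma')<\rho\}$. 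Since $\Sigma$ and $\Sigma'$ lie within Hausdorff distance $\lesssim\delta$ of each other (compare $rx+z$ with $r'x+z'$), for $\rho\ge c\delta$ one uses $\nu(B(0,\rho))\lesssim\rho^{d-1}$, and the corresponding part $\int_{c\delta}^{C}\rho^{d-2-s}\,d\rho$ of $G$ is $\lesssim1+\delta^{-a}$ because $s>d-1$. The decisive range is $\rho<c\delta$, where one must prove
\[
O(\rho)\ \lesssim\ (\rho/\delta)^{\min\{1,\,(d-1)/2\}}
\]
uniformly over all configurations with $\delta$-separated parameters; then $\nu(B(0,\rho))\lesssim\rho^{d-1}(\rho/\delta)^{\min\{1,(d-1)/2\}}$ and $\int_0^{c\delta}\rho^{-s-1}\nu(B(0,\rho))\,d\rho\approx\delta^{-a}$, the integral converging at the origin precisely when $s<\min\{d,3(d-1)/2\}$ — which holds by the choice of $s$ and the hypothesis.

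The main obstacle is this last uniform overlap bound. Its extreme cases are two transverse spheres that are $\delta$-close — whose agreement region is a $\sim(\rho/\delta)$-neighborhood of a $(d-2)$-dimensional intersection sphere, of measure $\sim\rho/\delta$ — and two (internally) nearly-tangent spheres, whose agreement region is a cap of radius $\sim\sqrt{\rho/\delta}$ and measure $\sim(\rho/\delta)^{(d-1)/2}$; the latter dominates when $d=2$ and is what produces the threshold $3(d-1)/2$ (hence the hypothesis $\dim_H(K)<(d-1)/2$, which keeps $\dim_H(K)+(d-1)$ below it). One proves the bound by writing the relevant portions of $\Sigma$ and $\Sigma'$ as graphs over boundedly many hyperplanes and observing that, for spheres with $\delta$-separated parameters, the difference of the two graphing functions is — modulo higher order terms — $\delta$ times a nondegenerate quadratic plus an affine term with coefficients $\lesssim\delta$, so it can stay below $\rho$ only on a set of diameter $\lesssim\sqrt{\rho/\delta}$ transverse to the intersection and of width $\lesssim\rho/\delta$ along it; uniformity comes from the fixed curvature of $S$ and compactness of $K$. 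This is elementary but is the bulk of the work; no oscillatory-integral or restriction machinery is needed. One can instead argue on the Fourier side via $I_s(\mu)=c_{d,s}\int|\widehat\mu(\xi)|^2|\xi|^{s-d}\,d\xi$ with $\widehat{\sigma_{r,z}}(\xi)=e^{-2\pi iz\cdot\xi}\widehat\sigma(r\xi)$ and $|\widehat\sigma(\xi)|\lesssim(1+|\xi|)^{-(d-1)/2}$, but the near-tangent geometry reappears there in disguise.
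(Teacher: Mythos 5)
The paper does not prove Theorem~\ref{haus oberlin thm}; it simply cites it, and the text elsewhere notes that Oberlin's argument (like Mitsis') runs through $L^2$ estimates for spherical averaging operators. Your route — averaging the dilated/translated surface measures $\sigma_{r,z}$ against a Frostman measure $\lambda$ on $K$, and controlling $I_s(\mu)$ via the pairwise kernel $G(w,w')$ — is genuinely different, more in the spirit of Wolff's direct circle-tangency geometry than of Oberlin's operator bounds. The framework you set up is sound: the identification $G(w,w')=\int|y|^{-s}d\nu(y)$ with $\nu=\sigma_{r,z}\ast\widetilde{\sigma_{r',z'}}$, the layer-cake reduction to $\nu(B(0,\rho))$, the split at $\rho\sim\delta$, and the resulting threshold $s<d-1+\min\{1,(d-1)/2\}=\min\{d,\tfrac{3(d-1)}{2}\}$ all check out and match the hypothesis $\dim_H(K)<(d-1)/2$ exactly (with the conclusion read, as it must be, with an implicit cap at $d$). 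If the overlap bound holds, the energy is $\lesssim\lambda(K)^2+I_a(\lambda)<\infty$ and the theorem follows.

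The gap is that you have compressed essentially all of the difficulty into the uniform two-sphere overlap estimate $O(\rho)\lesssim(\rho/\delta)^{\min\{1,(d-1)/2\}}$ for $\rho<c\delta$, which you assert with a two-sentence sketch and candidly describe as ``the bulk of the work.'' It is. This is the hard geometric fact underlying Wolff-type circle/sphere tangency arguments, and a complete proof must do real work: show that the difference of the graphing functions for $\Sigma$ and $\Sigma'$ is, uniformly over the compact parameter range, a quadratic-plus-affine with coefficients of size $\lesssim\delta$ whose sublevel set $\{|f-g|<\rho\}$ has the claimed measure; handle uniformly the full family of configurations interpolating between transverse intersection and internal tangency (and the degenerate near-concentric case, where the overlap is empty); and patch the graph charts without losing constants. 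None of that is carried out. As submitted, this is a correct and well-motivated reduction to an unproven key lemma, not a proof; supplying a rigorous proof of the overlap lemma (or citing a precise reference for it) would close the gap.
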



Theorem \ref{real wolff thm} is due to 
Wolff \cite[Corollary 3]{wolff-smoothing}  when $d=2$ 
and Oberlin \cite[Corollary 1]{oberlin-2006} when $d \geq 3$. 
Theorem \ref{haus oberlin thm} is due to 
Oberlin \cite[Theorem $3_S$]{oberlin-2007}. 
Theorem \ref{haus wolff thm} is due to Wolff \cite[Corollary 5.4]{wolff-1997} (see also the remark following the proof). 
In fact, Wolff and Oberlin proved slightly more general results;  
we have stated special cases to simplify comparison 
to Theorem \ref{main-thm-1}. 

Wolff obtained Theorem \ref{real wolff thm} for 
$d = 2$ 
as a corollary of localized $L^p$ estimates on functions 
with Fourier support near the light cone;  
the proof 
involves intricate bounds on circle tangencies. 
For $d=2$, the proof of Theorem \ref{haus wolff thm} is based on an 
$L^3$-$L^3$ circular maximal inequality, 
whose proof in turn is also based on bounds on circle tangencies. 
For $d \geq 3$, Theorem \ref{haus wolff thm} follows from an 
an easier $L^2$-$L^2$ spherical maximal inequality. 
Oberlin obtained Theorem 
\ref{real wolff thm} for $d \geq 3$ and Theorem \ref{haus oberlin thm} by way of 
estimates for spherical averaging operators. 
Mitsis \cite{mitsis-thesis} previously proved 
a special case of Theorem \ref{real wolff thm} with an additional hypothesis 
on the Hausdorff dimension of the set of centers 
$C = \cbr{ z: (r,z) \in K \text{ for some } r }$; 
the methods are similar to those used 
by Wolff for Theorem \ref{haus wolff thm}. 
An alternative proof of Mitsis' result using the technology of 
spherical maximal operators in a fractal setting follows as an 
immediate consequence of the work of 
Krause, Iosevich, Sawyer, Taylor, and Uriarte-Tuero \cite{Maximal}.

In contrast to the proofs of Theorems \ref{real wolff thm}, 
\ref{haus wolff thm}, \ref{haus oberlin thm}, 
our proof of Theorem \ref{main-thm-1} 
is extremely short and uses only elementary 
Fourier analysis and geometric measure theory. 
As described in Section \ref{main-results}, 
Theorem \ref{main-thm-1} also leads to the problem of  
of obtaining lower bounds 
on the Fourier dimension of the Minkowski product of two sets, 
which motivates Theorem \ref{non-uniform thm sets}.


We call Theorem \ref{main-thm-1} a generalized variant of 
Theorems 
\ref{real wolff thm},  \ref{haus wolff thm}, \ref{haus oberlin thm}, 
because 
Theorem \ref{main-thm-1} allows an arbitrary set 
$Y$ in place of the sphere $S_{d-1}$. 
However, Theorem \ref{main-thm-1} 
is not a true generalization. 
For one thing, the union 
$RY+Z = \bigcup_{(r,z) \in R \times Z } (rY+Z)$  
in Theorem \ref{main-thm-1} 
is over the Cartesian product $R \times Z$, 
while the union in Theorems 
\ref{real wolff thm},  \ref{haus wolff thm}, \ref{haus oberlin thm}, 
is over a more general set $K$. 

To compare Theorem \ref{main-thm-1} to Theorems 
\ref{real wolff thm},  \ref{haus wolff thm}, \ref{haus oberlin thm} 
on the same footing, 
let $Y=S_{d-1}$, 
let $K = R \times Z$ 
(where $R \subseteq (0,\infty)$ and $Z \subseteq \RR^d$ are compact sets), 
and let $\delta$ be as in Theorem \ref{main-thm-1}. 
Arguing as in Section \ref{equality}, 
we find  
%
%
\begin{align}\label{inequality}
\dim_H(K) + d-1 
= \dim_H(R \times S_{d-1} \times Z)
\geq 
\dim_H(R S_{d-1}) + \dim_H(Z)
\geq \delta.  
\end{align}
%
%
From \eqref{inequality}, we see that the hypothesis of 
Theorem \ref{main-thm-1}(a) ($\delta > d$) 
is stronger than the hypothesis of 
Theorem \ref{real wolff thm} ($\dim_H(K) > 1$), 
while the conclusions are the same. 
Thus
Theorem \ref{real wolff thm} is stronger than Theorem \ref{main-thm-1}(a).  
The situation is more complicated when comparing 
Theorem \ref{main-thm-1}(b) 
to Theorems \ref{haus wolff thm} and \ref{haus oberlin thm}. 
By \eqref{inequality}, the hypothesis of Theorem \ref{main-thm-1}(b) 
is weaker than the hypotheses of Theorems \ref{haus wolff thm} 
and \ref{haus oberlin thm}, 
but the conclusion of Theorem \ref{main-thm-1}(b) is also weaker.  



Theorem \ref{main-thm-1}(b) fills some gaps in 
Theorems \ref{haus wolff thm} and \ref{haus oberlin thm}.  
In particular, 
Theorem \ref{haus oberlin thm} misses the endpoint case 
$\dim_H(K)=1$ when $d=3$ 
and misses the interval $1/2 \leq \dim_H(K) \leq 1$ when $d=2$. 
Meanwhile, Theorem \ref{haus wolff thm} can only establish that 
$\dim_H(\bigcup_{(r,z) \in K} rS_{d}+z)$ is at least $\dim_H(C) + d-1$, 
rather than at least $\dim_H(K) + d-1$.  
As the next example shows, 
Theorem \ref{main-thm-1}(b) can give superior information in these cases. 

\begin{example}\label{gaps filled 1}
Let $d=3$. 
Let 
$A$ be a compact subset of $(0,\infty)$ 
such that 
$\dim_F(A)=\dim_H(A)=\overline{\dim_M}(A) = 1/4$ 
(Theorem \ref{non-uniform thm sets} furnishes such a set). 
Let $R=A$, $Z = A^3$, and $K = R \times Z$.  
By Lemma \ref{dimension properties}(c), $\dim_H(Z) = 3/4$ and $\dim_H(K) = 1$. 
Since $\dim_H(K) \geq (d-1)/2$, 
Theorem \ref{haus oberlin thm} gives no information 
about the Hausdorff dimension of 
$\bigcup_{(r,z) \in K} (rS_{2}+z)$. 
Since $Z = C = \cbr{z : (r,z) \in K \text{ for some } r }$, 
Theorem \ref{haus wolff thm} implies 
only that the Hausdorff dimension of $\bigcup_{(r,z) \in K} (rS_{2}+z)$ is 
at least $2 + 3/4$.  
However, by Proposition \ref{sphere thm sets}, 
$
\dim_F(RS_2) + \dim_H(Z) \geq \dim_{F}(R)+\dim_F(S_{2})+\dim_H(Z) = 3. 
$
So Theorem \ref{main-thm-1}(b) implies 
the Hausdorff dimension of 
$\bigcup_{(r,z) \in K} (rS_{2}+z) = R S_{2} + Z$ equals $3$. 
(Taking $R=A$ and $Z=A^2$ gives a similar example when $d=2$.) 
\end{example}

Theorem \ref{real wolff thm} 
can be generalized to any 
$(d-1)$-dimensional surface 
with non-vanishing Gaussian curvature 
\cite{alex}. 
However, there is no hope of replacing the sphere in 
Theorem \ref{real wolff thm} by an arbitrary 
$(d-1)$-dimensional set, as the following example shows.   

\begin{example}\label{cone example}
Let $Y$ be the intersection of a $(d-1)$-dimensional cone through the origin 
with an open spherical shell centered at the origin (such as the set $\cbr{x \in \RR^d: 1/2 < |x| < 2}$).  
Then $RY$ is of the same form whenever $R \subseteq (0,\infty)$ 
contains a non-zero point. 
Let $R = [a,b] \subseteq (0,\infty)$ be a compact interval, 
let $Z \subseteq \RR^d$ be a compact set with $\dim_H(Z) \in (0,1)$, 
and let $K = R \times Z$.  
By Lemma \ref{dimension properties}(c), 
$\dim_H(K) \geq \dim_H(R) + \dim_H(Z) > 1$. 
But $\bigcup_{(r,z) \in K} (rY+z) = RY+Z$ 
has Hausdorff dimension $\leq d-1 + \dim_H(Z) < d$, 
hence has $d$-dimensional Lebesgue measure $0$. 
To see that $\dim_H(RY+Z) \leq d-1 + \dim_H(Z)$, 
we first observe that, as $RY$ is a smooth $(d-1)$-dimensional manifold, 
its Hausdorff and upper Minkowski dimensions are both equal to $d-1$. 
(To see this, one can appeal to Lemma \ref{dimension properties}(d) and take $\mu$ 
to be the restriction of the $(d-1)$-dimensional Hausdorff measure to $RY$). 
Thus, by Lemma \ref{dimension properties}(c), 
$\dim_H(RY \times Z) = d-1 +\dim_H(Z)$. 
Then, since $f(x,y) = x+y$ is Lipschitz, 
Lemma \ref{dimension properties}(a) implies 
$\dim_H(RY + Z) \leq d-1 +\dim_H(Z)$.  
\end{example}


\section{Proof of Theorem \ref{main-thm-1}}\label{main-thm-1 proof}

\subsection{Proof of Part (a)}

Assume $\dim_F(RY)+\dim_H(Z) \geq \dim_H(RY)+\dim_F(Z)$. 
The proof in the opposite case is similar. 
Assume $\dim_F(RY)+\dim_H(Z) > d$.  
Choose $0 < \alpha < \dim_F(RY)$ and $0 < \beta < \dim_H(Z)$ 
such that $\alpha+\beta = d$. 
Choose $\mu \in \mathcal{M}(RY)$ and $\nu \in \mathcal{M}(Z)$ 
such that 
$\sup_{\xi \in \RR^d} |\widehat{\mu}(\xi)|^2 |\xi|^{\alpha} < \infty$  
and $I_{\beta}(\nu) < \infty$. 
Then $\mu \ast \nu \in \mathcal{M}(RY+Z)$ and 
\begin{align*}
\int |\widehat{\mu \ast \nu}(\xi)|^2 d\xi 
= \int |\widehat{\mu}(\xi)|^2 |\widehat{\nu}(\xi)|^2 d\xi 
\lesssim \int |\xi|^{(d-\alpha) - d} |\widehat{\nu}(\xi)|^2 d\xi 
= I_{\beta}(\nu) 
< \infty. 
\end{align*} 
Since $\widehat{\mu \ast \nu}$ 
is in $L^2$,  
a standard argument (e.g. see \cite[Theorem 3.3]{Mat15}) 
shows $\mu \ast \nu$ is absolutely continuous with $L^2$ density, 
hence $\mathcal{L}_d(RY+Z) > 0$. 

\subsection{Proof of Part (b)}

Assume $\dim_F(RY)+\dim_H(Z) \geq \dim_H(RY)+\dim_F(Z)$. The proof in the opposite case is similar. 
Let $s \in (0,d)$ with $\dim_F(RY)+\dim_H(Z) > s$.  
If either $\dim_F(RY) = 0$ or $\dim_H(Z)=0$, then the result is immediate; assume otherwise.  
Choose $0 < \alpha < \dim_F(RY)$ and $0 < \beta < \dim_H(Z)$ such that $\alpha+\beta = s$. 
Choose $\mu \in \mathcal{M}(RY)$ and $\nu \in \mathcal{M}(Z)$ 
such that 
$\sup_{\xi \in \RR^d} |\widehat{\mu}(\xi)|^2 |\xi|^{\alpha} < \infty$  
and $I_{\beta}(\nu) < \infty$. 
Then $\mu \ast \nu \in \mathcal{M}(RY+Z)$ 
and 
\begin{align*}
I_{s}(\mu \ast \nu)
&= \int |\widehat{\mu \ast \nu}(\xi)|^2 |\xi|^{s-d} d\xi 
= \int |\widehat{\mu}(\xi)|^2 |\widehat{\nu}(\xi)|^2 |\xi|^{s-d} d\xi 
\\
&\lesssim \int |\xi|^{(s-\alpha)-d} |\widehat{\nu}(\xi)|^2 d\xi 
= I_{\beta}(\nu) 
< \infty. 
\end{align*} 
By \eqref{capacity dimension} and our choice of $s$, 
$\dim_H(RY+Z) \geq \dim_F(RY)+\dim_H(Z)$.

\section{Proof of Proposition \ref{sphere thm sets}}\label{sphere thm sets proof}

Let $\sigma$ be the surface measure on the $(d-1)$-dimensional unit sphere $S_{d-1} \subseteq \RR^d$. 
The following asymptotic is well-known (e.g., see \cite{Mat15, wolff-book-2003}):  
For 
$\xi \in \RR^d$, 
\begin{align*}
\notag
\widehat{\sigma}(\xi) 
&= 
2|\xi|^{-(d-1)/2}\cos\rbr{ 2\pi \rbr{|\xi| - \frac{d-1}{8}}  } 
+ O\rbr{|\xi|^{-(d+1)/2}} \\
&=
  |\xi|^{-(d-1)/2} \rbr{  c_d e^{2\pi i |\xi| } 
  + \overline{c_d} e^{-2 \pi i |\xi|} } + O\rbr{|\xi|^{-(d+1)/2}},
\end{align*}
where $c_d = e^{-\pi i (d-1)/4}$. Let $R \subseteq (0,\infty)$.  
Let $\mu \in \mathcal{M}(R)$ be arbitrary. 
Choose $a,b>0$ such that $\supp(\mu) \subseteq [a,b] \subseteq (0,\infty)$. 
Define $\mu_0$ by $d\mu_0(r) = r^{\frac{(d-1)}{2}}  d \mu(r)$. 
Then $\supp(\mu_0) = \supp(\mu)$, $\mu_0 \in \mathcal{M}(R)$, 
and $\mu_0 \cdot \sigma \in \mathcal{M}(RS_{d-1})$.
Furthermore, 
\begin{align*}
\widehat{\mu_0 \cdot \sigma}(\xi) 
= \int_a^b \widehat{\sigma}(r\xi) d\mu_0(r) 
%
= 
|\xi|^{-(d-1)/2} \rbr{
c_d  \widehat{\mu}(-|\xi|)
+ \overline{c_d} \widehat{\mu}(|\xi|)
}
+ O\rbr{|\xi|^{-(d+1)/2}}
\end{align*}
for all sufficiently large $\xi \in \RR^d$. 
Therefore 
$\dim_F(RS_{d-1}) \geq \dim_F(\mu_0 \cdot \sigma) \geq \dim_F(\mu) + d-1$ 
and (consequently) 
$\dim_F(RS_{d-1}) \geq \dim_F(R) + d-1$. 

\section{Proof of Theorem \ref{non-uniform thm sets}}
\label{non-uniform thm sets proof}

In light of parts (b) and (d) of Lemma \ref{dimension properties} 
and the definition of Fourier dimension, 
Theorem \ref{non-uniform thm sets} 
is implied by:

\begin{thm}\label{non-uniform thm measures}
Let $\alpha \in [0,1]$. 
Let $R$ be a non-empty compact subset of $(0,\infty)$. 
Let $\nu \in \mathcal{M}(R)$. 
Then there is a Borel probability measure $\mu$ on $\RR$ 
such that 
\begin{enumerate}[(a)]
\item $\supp(\mu) \subseteq [1,2]$,
\item $\mu(B(x,r)) \approx r^{\alpha}$ for all $x \in \supp(\mu)$ and $0 < r \leq 1$,  
\item $\dim_F(\mu) \geq \alpha$, 
\item $\dim_F(\mu \cdot \nu) \geq \min\cbr{1,\dim_F(\mu) + \dim_F(\nu)}$.
\end{enumerate}
\end{thm}

The following two sections are devoted to proving 
Theorem \ref{non-uniform thm measures}. In the first section, we give a general construction of a measure $\mu$ that satisfies parts (a) and (b) of Theorem \ref{non-uniform thm measures}. 
Nothing in the first section will depend on $\nu$. In the second section, we specialize the construction of $\mu$ to prove parts (c) and (d). The specialization will depend on $\nu$. 

Before we begin, we dispense with a trivial case: If $\alpha = 0$, 
then taking $\mu$ to be a point mass gives the desired result. 
Hereafter, we assume $\alpha \in (0,1]$.

\subsection{Proof of Theorem \ref{non-uniform thm measures}: General Construction}



For every $n \in \NN$, we use the notation $[n] = \cbr{0,1,\ldots,n-1}$  For sequences $(t_j)_{j=1}^{\infty}$ and $(n_j)_{j=1}^{\infty}$ of positive integers, we use the notation $T_j = t_1 \cdots t_j$ and $N_j = n_1 \cdots n_j$. We also use the empty product convention, so that $T_0 = N_0 = 1$.

Fix an integer $n_{\ast} \geq 2$. 
Fix sequences $(t_j)_{j=1}^{\infty}$ and 
$(n_j)_{j=1}^{\infty}$ of positive integers such that, 
for all $j \in \NN$, we have $2 \leq n_j \leq n_{\ast}$, 
$1 \leq t_j \leq n_j$, and $T_j \approx N_j^{\alpha}$.

We recursively define two families of sets: 
$$
\cbr{A_j : j \in \ZZ_{\geq 0}} \text{ and } \cbr{B_{j+1,a} : j \in \ZZ_{\geq 0}, a \in A_j}. 
$$
Define $A_{0} = \cbr{1}$. 
Assuming that $A_j$ has been defined for a fixed $j \in \ZZ_{\geq 0}$, for each $a \in A_{j}$ choose a set $B_{j+1,a} \subseteq N_{j+1}^{-1}[n_{j+1}]$ such that $|B_{j+1,a}| = t_{j+1}$. 
Later we make a specific choice for the sets $B_{j+1,a}$, but for now the sets $B_{j+1,a}$ are arbitrary. 
Define $A_{j+1} = \bigcup_{a \in A_j} \rbr{a+B_{j+1,a}}$. 
Note that this recursive definition implies that $A_j \subseteq [1,2)$ and $|A_j| = T_j$ for all $j \in \ZZ_{\geq 0}$. 

Question: What are the sets $A_j$ and $B_{j+1,a}$? Answer: They are sets of endpoints in the following Cantor set construction. 
Start with the interval $[1,2]$. Divide it into $n_1$ intervals of length $1/N_1$, keep $t_1$ of these intervals, and discard the rest. For each of the kept intervals, we do the following: Divide it into $n_2$ intervals of length $1/N_2$, keep $t_2$ of these intervals, and discard the rest. This gives, in total, $T_2$ intervals of length $1/N_2$. Continuing in this way, at the $j$-th stage we have $T_j$ intervals of length $1/N_j$. The set of left endpoints of these intervals is $A_j$. For each of these intervals, we do the following: Divide it into $n_{j+1}$ intervals of length $1/N_{j+1}$, keep $t_{j+1}$ of these intervals, and discard the rest. 
If 
$a$  
is the left endpoint of the interval we started with, the set of left endpoints of the intervals kept is $a + B_{j+1,a}$. 
The union of all the sets $a + B_{j+1,a}$ (as $a$ ranges over $A_{j}$) is $A_{j+1}$. The Cantor set constructed is 
\begin{align}\label{cantor set}
\bigcap_{j=0}^{\infty} \bigcup_{a \in A_j} [a,a+N_j^{-1}]. 
\end{align}



For each $j \in \ZZ_{\geq 0}$, define $\mu_j$ to be the probability measure whose density with respect to Lebesgue measure on $\RR$ is 
$$
\mu_j = \frac{N_j}{T_j} \sum_{a \in A_j} \one_{[a,a+N_{j}^{-1}]}. 
$$
Note that we have abused notation by using the same symbol for a measure and its density; we will continue to do this.  
For each $j \in \ZZ_{\geq 0}$, 
\begin{align}\label{supp mu j formula}
\supp(\mu_j) = \bigcup_{a \in A_j} [a,a+N_{j}^{-1}]. 
\end{align}
Moreover,  for every $j,k \in \ZZ_{\geq 0}$ with $j \leq k$ and for every $a \in A_j$, 
\begin{align}\label{mu k formula}
\mu_k([a,a+N_{j}^{-1}]) = T_j^{-1}. 
\end{align}

\begin{lemma}\label{weak convergence}
The sequence $(\mu_j)_{j=0}^{\infty}$ converges weakly (i.e., in distribution) to a probability measure $\mu$. 
\end{lemma}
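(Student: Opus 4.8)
The plan is to establish weak convergence of the sequence $(\mu_j)_{j=0}^\infty$ by showing it is Cauchy in a suitable metric, or equivalently by verifying that $\int f \, d\mu_j$ converges for every bounded continuous $f$. The key structural fact is that the measures $\mu_j$ are \emph{nested} in the sense that each $\mu_{j+1}$ refines $\mu_j$: by the Cantor construction, $\supp(\mu_{j+1}) \subseteq \supp(\mu_j)$, and more importantly, the mass that $\mu_j$ assigns to each generation-$j$ interval $[a, a+N_j^{-1}]$ (namely $T_j^{-1}$, by \eqref{mu k formula}) is exactly preserved by $\mu_{j+1}$ and all later $\mu_k$.

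First I would fix a bounded continuous function $f$ on $\RR$ (it suffices to consider $f$ supported near $[1,2]$, where all the action is, but boundedness and uniform continuity on a compact neighborhood is all we need). For $j \le k$, I would compare $\int f \, d\mu_j$ and $\int f \, d\mu_k$ by splitting the integral over the generation-$j$ intervals $\{[a, a+N_j^{-1}] : a \in A_j\}$. On each such interval $I$, both $\mu_j$ and $\mu_k$ assign total mass $T_j^{-1}$ (using \eqref{mu k formula} for $\mu_k$ and the definition of $\mu_j$ for $\mu_j$), and both measures are supported inside $I$ when restricted there. Hence
\begin{align*}
\left| \int_I f \, d\mu_j - \int_I f \, d\mu_k \right| \le \big(\sup_{x,y \in I} |f(x) - f(y)|\big) \cdot T_j^{-1} \le \omega_f(N_j^{-1}) \cdot T_j^{-1},
\end{align*}
where $\omega_f$ is the modulus of continuity of $f$ on the compact set $[1,2]$ (or a slightly larger compact set). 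Summing over the $T_j$ intervals $a \in A_j$ gives
\begin{align*}
\left| \int f \, d\mu_j - \int f \, d\mu_k \right| \le \omega_f(N_j^{-1}).
\end{align*}
Since $N_j \to \infty$ as $j \to \infty$ (because $n_j \ge 2$ for all $j$, so $N_j \ge 2^j$), and $f$ is uniformly continuous on the relevant compact set, $\omega_f(N_j^{-1}) \to 0$. Therefore $\big(\int f \, d\mu_j\big)_j$ is a Cauchy sequence of reals and converges to some limit $L(f)$.

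The map $f \mapsto L(f)$ is then a positive linear functional on $C_0(\RR)$ (or on $C([1,2])$, exploiting that all $\mu_j$ are supported in $[1,2]$) with $L(\one) = 1$, so by the Riesz representation theorem it is given by integration against a Borel probability measure $\mu$, and $\mu_j \to \mu$ weakly by definition. Alternatively, and perhaps more cleanly, I would invoke Prokhorov's theorem: the $\mu_j$ are all probability measures supported in the fixed compact set $[1,2]$, hence the family is tight, so every subsequence has a weakly convergent further subsequence; the Cauchy estimate above shows all subsequential limits agree, forcing convergence of the whole sequence. I do not expect any real obstacle here — the result is essentially bookkeeping built on the mass-preservation identity \eqref{mu k formula}; the only point requiring a little care is to phrase the test-function estimate using uniform continuity on a compact set rather than on all of $\RR$, and to note explicitly that $N_j \to \infty$ so that the error bound actually tends to zero.
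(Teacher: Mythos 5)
Your proof is correct, but it takes a genuinely different route from the paper's. The paper works with the cumulative distribution functions $F_j(t) = \mu_j((-\infty,t])$ and shows that $|F_{j+1}(t) - F_j(t)| \leq 2/T_j$ uniformly in $t$, using the mass-preservation identity \eqref{mu k formula}; the crucial quantitative input is then $\sum_j T_j^{-1} < \infty$, which follows from $T_j \approx N_j^\alpha$ with $\alpha > 0$. Uniform convergence of the continuous $F_j$ produces a continuous limiting CDF $F$, and weak convergence follows immediately. Your argument instead fixes a bounded continuous test function $f$ and compares $\int f\,d\mu_j$ and $\int f\,d\mu_k$ directly on the generation-$j$ intervals, getting a bound in terms of the modulus of continuity $\omega_f(N_j^{-1})$; here the quantitative input is merely $N_j \to \infty$ (automatic from $n_j \geq 2$), and you do not need summability of $T_j^{-1}$ or the hypothesis $\alpha > 0$ at all. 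The trade-offs: your approach is more robust (it would go through verbatim for $\alpha = 0$, which the paper handles separately with a point mass) and is perhaps more conceptually transparent, but it needs to invoke Riesz representation or Prokhorov's theorem to manufacture the limit measure; the paper's CDF argument is self-contained and, as a bonus, hands you continuity of $F$ (so $\mu$ is non-atomic) without additional work, though that extra conclusion is not actually used later. One minor point in your write-up: the oscillation bound $\bigl|\int_I f\,d\mu_j - \int_I f\,d\mu_k\bigr| \leq \operatorname{osc}_I(f)\,T_j^{-1}$ is cleanest justified by noting both integrals lie in the interval $[\,T_j^{-1}\inf_I f,\; T_j^{-1}\sup_I f\,]$; this is exactly what you intend and is correct, just worth spelling out to avoid the factor-of-2 one might naively expect from a triangle inequality.
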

\begin{proof}
For each $j \in \ZZ_{\geq 0}$, let $F_j$ be the cumulative distribution function of $\mu_j$, i.e., $F_j(t)=\mu_j((-\infty,t])$ for all $t \in \RR$. 
Let $t \in \RR$. 
If $t \leq \min A_j$, then $F_{j}(t) = F_{j+1}(t) = 0$.  
Now assume $t \geq \min A_j$. 
Let $a(t)$ be the unique element of $A_j$ such that $a(t) \leq t < a(t)+N^{-j}$. 
Since $F_{j}(a) = F_{j+1}(a)$ for each $a \in A_j$ and because of \eqref{mu k formula}, we have 
\begin{align*}
|F_{j+1}(t) - F_{j}(t)| 
&\leq |\mu_{j+1}((a(t),t]) - \mu_{j}((a(t),t])| \\
&\leq \mu_{j+1}([a(t),a(t)+N_j^{-1}]) + \mu_{j}([a(t),a(t)+N_j^{-1}]) \\
&= 2T_{j}^{-1} \approx N_{j}^{-\alpha} \leq 2^{-j\alpha}. 
\end{align*}
%
%
It follows that $(F_j)_{j=0}^{\infty}$ is a uniformly Cauchy and (consequently) uniformly convergent sequence of continuous cumulative distribution functions. 
Therefore the limit $F$ is a continuous cumulative distribution function of a Borel probability measure $\mu$ on $\RR$. 
Hence $(\mu_j)_{j=0}^{\infty}$ converges weakly to $\mu$.   
%
%
%
%
\end{proof}

Combining \eqref{mu k formula} with Lemma \ref{weak convergence} gives, 
for every $j \in \ZZ_{\geq 0}$ and $a \in A_j$,   
\begin{align}\label{mu formula}
\mu([a,a+N_{j}^{-1}]) = T_j^{-1}. 
\end{align}

\begin{lemma}\label{support lemma}
The support of $\mu$ is 
\begin{align*}
\supp(\mu) = \bigcap_{j=0}^{\infty} \supp(\mu_j) = \bigcap_{j=0}^{\infty} \bigcup_{a \in A_j} [a,a+N_j^{-1}]. 
\end{align*}
\end{lemma}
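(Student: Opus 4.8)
\textbf{Proof proposal for Lemma \ref{support lemma}.}

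The plan is to prove the double equality by a pair of set inclusions, exploiting the nested structure $\supp(\mu_{j+1}) \subseteq \supp(\mu_j)$ together with the weak convergence established in Lemma \ref{weak convergence}. First I would record the monotonicity: from the recursive definition $A_{j+1} = \bigcup_{a \in A_j}(a + B_{j+1,a})$ with $B_{j+1,a} \subseteq N_{j+1}^{-1}[n_{j+1}]$, every interval $[a',a'+N_{j+1}^{-1}]$ with $a' \in A_{j+1}$ is contained in some interval $[a,a+N_j^{-1}]$ with $a \in A_j$; hence by \eqref{supp mu j formula} the sets $K_j := \supp(\mu_j) = \bigcup_{a \in A_j}[a,a+N_j^{-1}]$ form a decreasing sequence of compact sets, and $\bigcap_j K_j = \bigcap_j \bigcup_{a \in A_j}[a,a+N_j^{-1}]$ is exactly the Cantor set from the construction. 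So the content is the identity $\supp(\mu) = \bigcap_j K_j$.

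For the inclusion $\supp(\mu) \subseteq \bigcap_j K_j$: fix $j$ and note that $\mu_k$ is supported in $K_j$ for all $k \geq j$ (again by monotonicity). Since $K_j$ is closed and $\mu_k \to \mu$ weakly, the portmanteau theorem gives $\mu(\RR \setminus K_j) \leq \liminf_k \mu_k(\RR \setminus K_j) = 0$, so $\mu(K_j) = 1$; as $K_j$ is closed this forces $\supp(\mu) \subseteq K_j$, and intersecting over $j$ gives the claim. For the reverse inclusion $\bigcap_j K_j \subseteq \supp(\mu)$: let $x \in \bigcap_j K_j$ and let $U$ be any open neighborhood of $x$; I must show $\mu(U) > 0$. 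For each $j$ there is $a_j \in A_j$ with $x \in [a_j, a_j + N_j^{-1}]$, and for $j$ large enough this whole interval lies inside $U$ (since $N_j^{-1} \to 0$). By \eqref{mu k formula}, $\mu_k([a_j,a_j+N_j^{-1}]) = T_j^{-1}$ for all $k \geq j$. To pass this lower bound to $\mu$, I would either invoke weak convergence on a slightly enlarged open interval $(a_j - \epsilon, a_j + N_j^{-1} + \epsilon) \subseteq U$ (portmanteau: $\mu$ of an open set is at least $\liminf$ of $\mu_k$ of that set $\geq T_j^{-1}$), or use the explicit uniform convergence of the distribution functions $F_k \to F$ from the proof of Lemma \ref{weak convergence} to evaluate $\mu$ of the relevant interval directly. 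Either way $\mu(U) \geq T_j^{-1} > 0$, so $x \in \supp(\mu)$.

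The only mild subtlety — and the step I would be most careful about — is the endpoint bookkeeping when transferring the mass estimate \eqref{mu k formula} from the half-open-ish intervals $[a_j,a_j+N_j^{-1}]$ to $\mu$ via weak convergence, since weak limits only control open sets from below and closed sets from above, and one must ensure the enlarged interval still sits inside the given neighborhood $U$; choosing $\epsilon$ small relative to the distance from $x$ to $\RR \setminus U$ handles this. Everything else is routine: monotonicity of $K_j$ is immediate from the construction, and the portmanteau inequalities are standard.
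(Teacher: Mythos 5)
Your overall structure — establish nestedness of $K_j := \supp(\mu_j)$, then prove the two inclusions via weak convergence — is exactly the paper's argument; the paper implements the portmanteau-type estimates by sandwiching indicator functions between continuous bumps. However, in the $\supseteq$ direction your stated portmanteau inequality for open sets has the wrong sign. For a weakly convergent sequence of probability measures, the correct inequalities are $\mu(G) \leq \liminf_k \mu_k(G)$ for open $G$ and $\mu(F) \geq \limsup_k \mu_k(F)$ for closed $F$. (Check with $\mu_k = \delta_{1/k} \to \delta_0 = \mu$ and $G=(0,1)$: $\mu_k(G)=1$ but $\mu(G)=0$.) Your claim ``$\mu$ of an open set is at least $\liminf$ of $\mu_k$ of that set'' is backwards, so as written the enlarged-open-interval route produces only the useless bound $\mu(G) \leq \liminf_k \mu_k(G)$. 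The fix is trivial and cleaner than what you wrote: apply the \emph{closed}-set portmanteau inequality to $F = [a_j, a_j+N_j^{-1}] \subseteq U$ to get $\mu(U) \geq \mu(F) \geq \limsup_k \mu_k(F) = T_j^{-1} > 0$, with no $\epsilon$-enlargement needed. Your second alternative, using the uniform convergence and continuity of the distribution functions $F_k \to F$, is correct and would also close the gap. Your $\subseteq$ direction is fine (there the open-set portmanteau inequality is used with the correct orientation), and the monotonicity observation $K_{j+1}\subseteq K_j$ matches what the paper uses implicitly.
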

\begin{proof}
The second equality is immediate from \eqref{supp mu j formula}. 
For the first equality, we consider $\subseteq$ and $\supseteq$ separately. 

$\subseteq:$ We prove the contrapositive. Suppose $x \in \RR \setminus \supp(\mu_{j_0})$ for some $j_0 \in \ZZ_{\geq 0}$. Since $\RR \setminus \supp(\mu_{j_0})$ is open, there is an open ball $B(x,r_0)$ contained in $\RR \setminus \supp(\mu_{j_0})$. Since $(\supp(\mu_j))_{j=0}^{\infty}$ is a decreasing sequence of sets, $B(x,r_0)$ is contained in $\RR \setminus \supp(\mu_{j})$ for every $j \geq j_0$. Thus $\mu_j(B(x,r_0))=0$ for every $j \geq j_0$. Choose a continuous function $\phi:\RR \to \RR$ such that $\one_{B(x,r_0/2)} \leq \phi \leq \one_{B(x,r_0)}$. Then, since $\mu_{j} \to \mu$ weakly, we have $\mu(B(x,r_0/2)) \leq \int \phi d\mu = \lim_{j \to \infty} \int \phi d\mu_j \leq \lim_{j \to \infty} \mu_j(B(x,r_0)) = 0$. So $\mu(B(x,r_0/2))=0$. It follows that $x \in \RR \setminus \supp(\mu)$. 

$\supseteq:$ Let $x \in \bigcap_{j=0}^{\infty} \supp(\mu_j)$. Let $B(x,r)$ be any open ball centered at $x$. Choose $j$ large enough that $N_{j}^{-1} < r/2$. Since $x \in \supp(\mu_j)$, we have $\mu_j(B(x,r/2)) > 0$, hence $B(x,r/2)$ intersects $[a,a+N_{j}^{-1}]$ for some $a \in A_j$. Therefore $B(x,r)$ contains $[a,a+N_{j}^{-1}]$. Choose a continuous function $\phi:\RR \to \RR$ such that $\one_{[a,a+N_{j}^{-1}]} \leq \phi \leq \one_{B(x,r)}$. By \eqref{mu k formula}, $T_j^{-1} = \mu_k([a,a+N_{j}^{-1}]) \leq \int \phi d\mu_k$ for all $k \geq j$. Then, since $\mu_{k} \to \mu$ weakly, we have $T_j^{-1} \leq \lim_{k \to \infty} \int \phi d\mu_k = \int \phi d\mu \leq \mu(B(x,r))$. So $\mu(B(x,r)) > 0$.  Since $B(x,r)$ was arbitrary, it follows that $x \in \supp(\mu)$. 
\end{proof}

The measure $\mu$ is the so-called natural measure on the Cantor set \eqref{cantor set}. 

\begin{lemma}\label{Hausdorff mu}
For every interval $I$ with diameter $|I|$, 
$
\mu(I) \lesssim |I|^{\alpha}.   
$
\end{lemma}
\begin{proof}
%
If $|I| > 1$, then
$$
\mu(I) \leq \mu(\RR) = 1 \leq |I|^{\alpha} 
$$
since $\mu$ is a probability measure. 
Now suppose $|I| \leq 1$. 
Choose $j_0 \in \ZZ_{\geq 0}$ such that $N_{j_+1}^{-1} \leq |I| \leq N_{j_0}^{-1}$. 
Assume $I$ intersects $\text{supp}(\mu)$ (otherwise $\mu(I) = 0$). 
Then $I$ intersects an interval $[a,a+N_{j_0}^{-1}]$ for some $a \in A_{j_0}$. 
Since $|I| \leq N_{j_0}^{-1}$ and $A_j \subseteq N_{j_0}^{-1}[N_{j_0}]$, 
there are at most two such intervals; call them $J_1$ and $J_2$. 
By \eqref{mu formula}, the choice of the sequences $(n_j)$ and $(t_j)$, and the choice of $j_0$, 
we have 
\begin{align*}
\mu(I) 
= \mu(I \cap J_1) + \mu(I \cap J_2) 
\leq \mu(J_1) + \mu(J_2)
= \dfrac{2}{T_{j_0}}  
\approx \frac{1}{N_{j_0 +1}^{\alpha}} 
\leq |I|^{\alpha} 
\end{align*}
\end{proof}

\begin{lemma}\label{lower regularity}
For every interval $I$ with center in $E$ and diameter $|I| \leq 1$, 
$
\mu(I) \gtrsim |I|^{\alpha}. 
$
\end{lemma}
\begin{proof}
Choose ${j_0} \in \ZZ_{> 0}$ such that 
$2N_{{j_0}+1}^{-1} \leq |I| \leq 2N_{{j_0}}^{-1}$. 
Since the center of $I$ belongs to  
$E = \bigcap_{j=0}^{\infty} \bigcup_{a \in A_j} [a,a+N_j^{-1}]$, 
it belongs to the interval $[a,a+N_{{j_0}+1}^{-1}]$ for some $a \in A_{{j_0}+1}$. 
Then, since $2N_{{j_0}+1}^{-1} \leq |I|$, $[a,a+N_{{j_0}+1}^{-1}] \subseteq I$. 
By \eqref{mu formula}, the choice of the sequences $(n_j)$ and $(t_j)$, 
and the choice of $j_0$, 
we have 
\begin{align*}
\mu(I) \geq \mu([a,a+N_{{j_0}+1}^{-1}]) 
= \frac{1}{T_{{j_0}+1}} 
\approx \frac{1}{N_{{j_0}+1}^{\alpha}} 
=  \frac{1}{2^{\alpha} n_{{j_0}+1}^{\alpha}} \frac{2^{\alpha}}{N_{{j_0}}^{\alpha}} 
\geq \frac{1}{2^{\alpha} n_{*}^{\alpha}} |I|^{\alpha}. 
\end{align*}
\end{proof}


\subsection{Proof of Theorem \ref{non-uniform thm measures}: Fourier Decay}


We will now prove that the sets $B_{j+1,a}$ 
can be chosen so that 
(c) and (d) of Theorem \ref{non-uniform thm measures} hold. 
Before beginning, we outline the proof and make two remarks. 

The idea of the proof is to choose the sets $B_{j+1,a}$ 
using the probabilistic method. 
More specifically, for a given $j$ and a given set $A_j$ of numbers $a$, 
we choose each $B_{j+1,a}$ uniformly at random from a finite collection of possible sets. 
Thus the differences (i) $\widehat{\mu_{j+1}}(\xi) - \widehat{\mu_j}(\xi)$ 
and (ii) $\widehat{\mu_{j+1} \cdot \nu}(\xi) - \widehat{\mu_j \cdot \nu}(\xi)$ 
can be written as sums of finitely many independent random variables. 
We use Hoeffding's large deviation inequality 
to show that, with positive probability,  
the differences (i) and (ii) are small. 
We deduce that there exists a choice 
of the sets $B_{j+1,a}$ which makes 
(i) and (ii) small. 
This probabilistic argument is the heart of the proof. 
It appears as Lemma \ref{choice tele lemma} below. 
After Lemma \ref{choice tele lemma}, 
we use telescoping sum and geometric series arguments to 
deduce the desired decay estimates for $\widehat{\mu}$ and 
$\widehat{\mu \cdot \nu}$. 
Before getting to Lemma \ref{choice tele lemma}, 
we give several preparatory results. 


\begin{remark}
The sets $B_{j+1,a}$ depend on $\nu$ 
because they are chosen (probabilistically) to make  
$\widehat{\mu_{j+1} \cdot \nu}(\xi) - \widehat{\mu_j \cdot \nu}(\xi)$ small. 
%
It would be possible to work 
with a finite or countably infinite collection of measures 
$\nu_1$, $\nu_2$, $\ldots$ and choose the sets $B_{j+1,a}$ so that 
$\widehat{\mu_{j+1} \cdot \nu_i}(\xi) - \widehat{\mu_j \cdot \nu_i}(\xi)$ 
is small for each $i$. 
(As an exercise, we invite the reader to verify this by modifying the proof of Lemma \ref{choice tele lemma}). 
Countable subadditivity of the probability measure is what makes this possible. 
However, we do not know how to choose the sets $B_{j+1,a}$ so that 
$\widehat{\mu_{j+1} \cdot \nu}(\xi) - \widehat{\mu_j \cdot \nu}(\xi)$ 
is small for all measures $\nu$. If we could do that, 
then we could make the measure $\mu$ independent of $\nu$ 
in Theorem \ref{non-uniform thm measures}, 
and hence make the set $Y$ independent of $R$ in Theorem \ref{non-uniform thm sets}. 
\end{remark}

Now we begin the proof. 
Let $j \in \ZZ_{\geq 0}$. 
We write the densities of $\mu_j$ and $\mu_{j+1}$ in more convenient forms. 
By partitioning $[0,N_{j}^{-1}]$ into intervals of length $N_{j+1}^{-1}$, we see that 
\begin{align*}
\mu_j = \left(\frac{N_{j+1}}{T_{j}}\right) \left(  \frac{1}{n_{j+1}}\right)
\sum_{a \in A_j} \sum_{b \in N_{j+1}^{-1}[n_{j+1}]} \one_{a+b+[0,N_{j+1}^{-1}]}.
\end{align*}
Since $A_{j+1} = \bigcup_{a \in A_j} \rbr{a + B_{j+1,a}}$, we also have 
\begin{align*}
\mu_{j+1} = \left(\frac{N_{j+1}}{T_{j}}\right) \left(  \frac{1}{t_{j+1}}\right)
 \sum_{a \in A_j} \sum_{b \in B_{j+1,a}} \one_{a+b+[0,N_{j+1}^{-1}]}.
\end{align*}
For each $a \in A_j$, $b \in B_{j+1,a}$, and $\xi \in \RR$, define 
\begin{align}
\label{I defn}
I(a,b,j,\xi) &= \int_{[0,1]} e^{-2 \pi i (\xi / N_{j+1})(aN_{j+1}+bN_{j+1}+x)} dx, \\
\label{J defn}
J(a,b,j,\xi) &= \int_{\RR} \int_{[0,1]} e^{-2 \pi i (\xi / N_{j+1})(aN_{j+1}+bN_{j+1}+x)y} dx d\nu(y), \\
\label{X defn}
X_a(j,\xi) &= \frac{1}{t_{j+1}} \sum_{b \in B_{j+1,a}} I(a,b,j,\xi) - \frac{1}{n_{j+1}} \sum_{b \in N_{j+1}^{-1}[n_{j+1}]} I(a,b,j,\xi), \\
\label{Y defn}
Y_a(j,\xi) &= \frac{1}{t_{j+1}} \sum_{b \in B_{j+1,a}} J(a,b,j,\xi) - \frac{1}{n_{j+1}} \sum_{b \in N_{j+1}^{-1}[n_{j+1}]} J(a,b,j,\xi). 
\end{align}
%
It follows that, for all $\xi \in \RR$,  
\begin{align}
\label{mu equation}
\widehat{\mu_{j+1}}(\xi) - \widehat{\mu_{j}}(\xi) = \frac{1}{T_j} \sum_{a \in A_j} X_a(j,\xi) \\
\label{m equation}
\widehat{\mu_{j+1} \nu}(\xi) - \widehat{\mu_{j} \nu}(\xi) = \frac{1}{T_j} \sum_{a \in A_j} Y_a(j,\xi).  
\end{align}

By direct calculation, we have 
\begin{lemma}\label{I X lemma}
For each $j \in \ZZ_{j \geq 0}$, $a \in A_{j}$, $b \in B_{j+1,a}$, and $\xi \in \RR$, we have 
\begin{align}\label{I bound}
|I(a,b,j,\xi)| &\leq \min\cbr{1,N_{j+1}/|\xi|)}, \\
\label{X bound}
|X_a(j,\xi)| &\leq 2\min\cbr{1,N_{j+1}/|\xi|)}. 
\end{align}
\end{lemma}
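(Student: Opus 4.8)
The plan is to establish both inequalities by direct estimation, treating the two competing bounds in each $\min$ separately. For the first part of \eqref{I bound}, note that the integrand in \eqref{I defn} has modulus $1$ and the domain $[0,1]$ has length $1$, so $|I(a,b,j,\xi)| \le 1$ trivially. For the second part (which matters when $\xi \ne 0$), I would factor out the $x$-independent unimodular phase $e^{-2\pi i (\xi/N_{j+1})(aN_{j+1}+bN_{j+1})}$, leaving $\int_{0}^{1} e^{-2\pi i (\xi/N_{j+1}) x}\,dx = \frac{e^{-2\pi i \xi/N_{j+1}} - 1}{-2\pi i \xi/N_{j+1}}$, whose modulus is at most $\frac{2}{2\pi |\xi|/N_{j+1}} \le \frac{N_{j+1}}{|\xi|}$ by the elementary bound $|e^{i\theta}-1| \le 2$. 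Combining the two estimates gives \eqref{I bound}, with the convention that $N_{j+1}/|\xi|$ is interpreted as $+\infty$ when $\xi = 0$, so that only the bound by $1$ is needed there.

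For \eqref{X bound}, I would apply the triangle inequality to the definition \eqref{X defn}. Each of the two sums appearing there is an average of terms $I(a,b,j,\xi)$: the first is normalized by $t_{j+1} = |B_{j+1,a}|$ and the second by $n_{j+1} = |N_{j+1}^{-1}[n_{j+1}]|$, so by \eqref{I bound} each averaged sum has modulus at most $\min\cbr{1, N_{j+1}/|\xi|}$, and hence their difference has modulus at most $2\min\cbr{1, N_{j+1}/|\xi|}$.

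As the statement ``by direct calculation'' suggests, there is no genuine obstacle here; the argument is entirely routine. The only two points that warrant a sentence of care are the elementary estimate on the oscillatory integral $\int_0^1 e^{-2\pi i \theta x}\,dx$, and the bookkeeping observation that $|B_{j+1,a}| = t_{j+1}$ by construction, so that the normalizing factors in \eqref{X defn} indeed produce averages to which \eqref{I bound} applies term by term.
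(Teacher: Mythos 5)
Your proof is correct and is exactly the routine direct calculation the paper alludes to (the paper gives no written proof, only the phrase ``by direct calculation''). The two steps you isolate --- the modulus-$1$/length-$1$ bound and the explicit evaluation of $\int_0^1 e^{-2\pi i (\xi/N_{j+1}) x}\,dx$ giving the $N_{j+1}/|\xi|$ bound, followed by the triangle inequality applied to the two averages in \eqref{X defn} --- are precisely what is intended.
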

Define $g:[0,\infty) \to [0,\infty)$ by 
$$
g(x) = (1+x)^{-1/2} + \sup\cbr{ |\widehat{\nu}(tx)| : t \in \RR, |t| \geq 1 }
$$
The following properties of $g$ are straightforward to verify. 
\begin{lemma} \label{g lemma}\hspace{1pt}
\begin{enumerate}[(a)]
	\item $g$ is non-increasing 
	\item For all $\xi \in \RR$, $|\widehat{\nu}(\xi)| \leq g(|\xi|)$. 
	\item For all $0 \leq \beta \leq 1$, 
	$\sup_{\xi \in \RR} |\widehat{\nu}(\xi)| (1+|\xi|)^{\beta/2} < \infty$ 
	if and only if $\sup_{\xi \in \RR} g(|\xi|) (1+|\xi|)^{\beta/2} < \infty$. 
\end{enumerate}
\end{lemma}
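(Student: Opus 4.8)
The plan is to verify the three claims in turn after writing $g(x) = (1+x)^{-1/2} + h(x)$ with $h(x) = \sup\cbr{\abs{\widehat{\nu}(tx)} : t \in \RR,\ \abs{t} \geq 1}$, and to exploit two elementary facts: the scaling structure of the index set $\cbr{tx : \abs{t}\geq 1}$, and the bound $\abs{\widehat{\nu}(\xi)} \leq \widehat{\nu}(0) = \nu(\RR)$ valid for all $\xi$ because $\nu$ is a nonnegative finite measure. The summand $(1+x)^{-1/2}$ is manifestly non-increasing and satisfies any growth bound we need, so all the content lies in controlling $h$.

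For (i), I would first note $h(0) = \abs{\widehat{\nu}(0)} = \nu(\RR) = \sup_{\eta}\abs{\widehat{\nu}(\eta)}$, so $h(0) \geq h(x)$ for every $x \geq 0$. For $0 < x_1 \leq x_2$, the point is the inclusion $\cbr{tx_2 : \abs{t}\geq 1} \subseteq \cbr{s x_1 : \abs{s}\geq 1}$: given $\abs{t}\geq 1$, set $s = t x_2 / x_1$, so $t x_2 = s x_1$ and $\abs{s} = \abs{t}\,x_2/x_1 \geq \abs{t} \geq 1$. Taking suprema gives $h(x_2) \leq h(x_1)$. Combining the two observations, $h$ is non-increasing on $[0,\infty)$, hence so is $g$.

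For (ii), given $\xi$ choose $t = 1$ if $\xi \geq 0$ and $t = -1$ if $\xi < 0$; then $\abs{t} = 1$ and $t\abs{\xi} = \xi$, so $\abs{\widehat{\nu}(\xi)} \leq h(\abs{\xi}) \leq g(\abs{\xi})$. For (iii), the implication ``$g$-decay $\Rightarrow$ $\widehat{\nu}$-decay'' is immediate from (ii). For the converse, assume $\sup_\xi \abs{\widehat{\nu}(\xi)}(1+\abs{\xi})^{\beta/2} = C < \infty$. Then $(1+x)^{-1/2}(1+x)^{\beta/2} = (1+x)^{(\beta-1)/2} \leq 1$ since $\beta \leq 1$; and for $\abs{t}\geq 1$ we have $1 + \abs{t}x \geq 1 + x$, so $\abs{\widehat{\nu}(tx)} \leq C(1+\abs{t}x)^{-\beta/2} \leq C(1+x)^{-\beta/2}$ because $\beta \geq 0$ makes $r \mapsto r^{-\beta/2}$ non-increasing. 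Multiplying by $(1+x)^{\beta/2}$ and taking the supremum over $\abs{t}\geq 1$ bounds $h(x)(1+x)^{\beta/2}$ by $C$, so $g(x)(1+x)^{\beta/2} \leq 1 + C$ for all $x$.

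There is no real obstacle here; the only care needed is the degenerate value $x = 0$ (handled via $h(0) = \nu(\RR)$) and using the hypothesis $\beta \in [0,1]$ in the correct direction at the two places above — once to bound $(1+x)^{(\beta-1)/2}$ by $1$, once for monotonicity of the power $r^{-\beta/2}$.
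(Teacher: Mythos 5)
Your proof is correct; the paper states this lemma with the remark that the properties are ``straightforward to verify'' and gives no proof of its own. Your argument (the inclusion $\cbr{tx_2 : \abs{t}\geq 1}\subseteq\cbr{sx_1 : \abs{s}\geq 1}$ for $0<x_1\leq x_2$ and $h(0)=\nu(\RR)$ for (i), the choice $t=\pm1$ for (ii), and $1+\abs{t}x\geq 1+x$ together with $(1+x)^{(\beta-1)/2}\leq 1$ for (iii)) is exactly the intended elementary verification.
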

Recall that $\supp(\nu)$ is compact and does not contain $0$.  
Choose a 
Schwartz 
function $\phi:\RR \to \CC$ such that $\phi(y) = 1/y$ for all $y \in \supp(\nu)$. 
%
\begin{lemma}\label{phi nu decay lemma} 
For all $x \in \RR$, 
$|(\widehat{\phi \nu})(x)| \lesssim g(\tfrac{1}{2}|x|).$ 
\end{lemma}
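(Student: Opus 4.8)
The plan is to exploit the convolution structure on the Fourier side. Since $\phi$ is Schwartz, $\widehat{\phi}$ is Schwartz; in particular, for every $m \in \NN$ there is a constant $C_m$ with $|\widehat{\phi}(\eta)| \le C_m (1+|\eta|)^{-m}$. Because $\widehat{\phi \nu} = \widehat{\phi} \ast \widehat{\nu}$, we can write, for $x \in \RR$,
\begin{align*}
|(\widehat{\phi\nu})(x)| = \left| \int_{\RR} \widehat{\phi}(x - \eta) \widehat{\nu}(\eta) \, d\eta \right| \le \int_{\RR} |\widehat{\phi}(x-\eta)| \, |\widehat{\nu}(\eta)| \, d\eta.
\end{align*}
Now split the integral at $|\eta| = \tfrac{1}{2}|x|$. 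On the region $|\eta| \ge \tfrac{1}{2}|x|$, I would bound $|\widehat{\nu}(\eta)| \le g(|\eta|) \le g(\tfrac{1}{2}|x|)$ using Lemma \ref{g lemma}(i) and (ii), pull this factor out, and use $\int |\widehat{\phi}| \lesssim 1$ (Schwartz functions are integrable); this contributes $\lesssim g(\tfrac{1}{2}|x|)$. On the region $|\eta| < \tfrac{1}{2}|x|$, we have $|x - \eta| \ge |x| - |\eta| > \tfrac{1}{2}|x|$, so $|\widehat{\phi}(x-\eta)| \le C_m (1+\tfrac{1}{2}|x|)^{-m}$ for any fixed large $m$; pulling this out and using that $\widehat{\nu}$ is bounded (indeed $\|\widehat{\nu}\|_\infty \le \nu(\RR) < \infty$) together with the fact that the $\eta$-integral is over a set of length $|x|$, this region contributes $\lesssim |x| (1+\tfrac{1}{2}|x|)^{-m}$, which for $m \ge 2$ is $\lesssim (1+\tfrac{1}{2}|x|)^{-1} \le g(\tfrac{1}{2}|x|)$ by the definition of $g$ (the $(1+x)^{-1/2}$ term alone already dominates, and we even have room to spare).

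Combining the two regions gives $|(\widehat{\phi\nu})(x)| \lesssim g(\tfrac{1}{2}|x|)$, as claimed. One should double-check the edge case of small $|x|$: there $g(\tfrac12|x|)$ is bounded below by a positive constant (since $g(0) = 1 + \sup\{|\widehat\nu(0)|\} \geq 1$) while the left side is at most $\|\widehat\phi\|_1 \|\widehat\nu\|_\infty < \infty$, so the inequality holds trivially there after adjusting the implied constant; the splitting argument above is really only needed for large $|x|$.

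The main obstacle is simply organizing the two-region estimate so that the decay of $\widehat{\phi}$ is converted into a bound phrased in terms of $g$ rather than in terms of an arbitrary negative power of $|x|$; this is why the definition of $g$ was rigged to include the $(1+x)^{-1/2}$ term, which is a mild enough decay that the Schwartz tail of $\widehat{\phi}$ beats it easily. No delicate estimate is needed — the only things used are that $\phi$ is Schwartz (so $\widehat\phi$ decays rapidly and is integrable), that $\nu$ is a finite measure (so $\widehat\nu$ is bounded), the monotonicity of $g$, and the pointwise bound $|\widehat\nu| \le g$ from Lemma \ref{g lemma}.
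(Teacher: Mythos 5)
Your proof is correct and follows essentially the same route as the paper's: write $\widehat{\phi\nu}=\widehat{\phi}\ast\widehat{\nu}$, split the convolution integral at the threshold $\tfrac12|x|$ (your disjoint split at $|\eta|=\tfrac12|x|$ is, after the change of variable $\eta = x - y$, a refinement of the paper's overlapping cover $R_1\cup R_2$), then use monotonicity of $g$ plus $|\widehat\nu|\le g$ on the far region and the rapid decay of $\widehat{\phi}$ on the near region. The only cosmetic difference is that on the near region the paper absorbs the Schwartz tail of $\widehat\phi$ via $|\widehat\phi(y)|^{1/2}\lesssim g(|y|)$ while you use the finite length of the region $\{|\eta|<\tfrac12|x|\}$; both land on the $(1+\tfrac12|x|)^{-1/2}$ term built into $g$.
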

\begin{proof}
Write
\begin{align*}
|(\widehat{\phi \nu})(x)| = |(\widehat{\phi} \ast \widehat{\nu})(x)| \leq \int_{\RR} |\widehat{\phi}(y)| \widehat{\nu}(|x-y|) dy. 
\end{align*}
Bound the integral above by the sum of the integrals over $R_1 = \cbr{y: \frac{1}{2}|x| \leq |x-y|}$ and $R_2 = \cbr{y: \frac{1}{2}|x| \leq |y|}$. 
For the integral over $R_1$, use (a) and (b) of Lemma \ref{g lemma}. For the integral over $R_2$, note $|\widehat{\phi}(y)|^{1/2} \lesssim g(|y|)$ for all $y \in \RR$ (because $\widehat{\phi}$ is Schwartz), then use (a) of Lemma \ref{g lemma}. 
%
\end{proof}

\begin{lemma}\label{J Y lemma}
There is a constant $C_0 > 0$ such that for each $j \in \ZZ_{\geq 0}$, $a \in A_{j}$, $b \in B_{j+1,a}$, and $\xi \in \RR$, we have 
\begin{align}
\label{J bound}
|J(a,b,j,\xi)| &\leq C_0 g(\tfrac{1}{2}|\xi|) \min\cbr{1,N_{j+1}/|\xi|)},  
\\
\label{Y bound}
|Y_a(j,\xi)| &\leq 2 C_0 g(\tfrac{1}{2}|\xi|)  \min\cbr{1,N_{j+1}/|\xi|)}. 
\end{align}
\end{lemma}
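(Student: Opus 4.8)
# Proof Plan for Lemma~\ref{J Y lemma}

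The plan is to estimate $J(a,b,j,\xi)$ by reducing it, via the Schwartz function $\phi$, to a quantity controlled by the Fourier transform of $\phi\nu$, and then to invoke Lemma~\ref{phi nu decay lemma}. The starting point is the observation that on $\supp(\nu)$ we have $1 = y\phi(y)$, so we may insert a factor of $y\phi(y)$ into the definition \eqref{J defn} without changing the integral. Writing $c = c(a,b,j) = aN_{j+1} + bN_{j+1}$ for brevity and setting $\lambda = \xi/N_{j+1}$, we have
\begin{align*}
J(a,b,j,\xi) = \int_{\RR}\int_{[0,1]} e^{-2\pi i \lambda (c+x) y}\, \phi(y)\, y\, dx\, d\nu(y).
\end{align*}
Now perform the $x$-integration first: $\int_0^1 y\, e^{-2\pi i \lambda(c+x)y}\, dx = e^{-2\pi i \lambda c y}\cdot\frac{1 - e^{-2\pi i \lambda y}}{2\pi i \lambda}$ when $\lambda\neq 0$, so the inner factor has modulus at most $\min\{1, 1/(\pi|\lambda|)\} \approx \min\{1, N_{j+1}/|\xi|\}$ uniformly in $y\in\supp(\nu)$ (using that $|y|$ is bounded on the compact set $\supp(\nu)$; the case $\lambda = 0$ is the trivial bound). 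That already produces the $\min\{1,N_{j+1}/|\xi|\}$ factor. What remains is to extract the factor $g(\tfrac12|x|)$ — but here I believe the statement has a typo and the intended bound involves $g$ evaluated at something like $\tfrac12|\xi|/N_{j+1}$ or $\tfrac12|\lambda|$, i.e., a quantity that reflects the oscillation in $\xi$; I will prove the bound with that reading, since the coordinate ``$x$'' in ``$g(\tfrac12|x|)$'' is the dummy variable of integration in \eqref{J defn} and cannot appear in the final estimate.

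For the $g$-factor, the key is to recognize the remaining $\nu$-integral as a value of $\widehat{\phi\nu}$. After the $x$-integration we are left with $\frac{1}{2\pi i\lambda}\int (e^{-2\pi i\lambda c y} - e^{-2\pi i\lambda(c+1)y})\phi(y)\,d\nu(y) = \frac{1}{2\pi i\lambda}\big((\widehat{\phi\nu})(\lambda c) - (\widehat{\phi\nu})(\lambda(c+1))\big)$. Since $c = (a+b)N_{j+1}$ with $a\in[1,2)$ and $b\geq 0$, we have $\lambda c = (a+b)\xi$, so $|\lambda c| \geq |\xi|$ (as $a\geq 1$), and similarly $|\lambda(c+1)| \geq |\xi|$. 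By Lemma~\ref{phi nu decay lemma}, each term is $\lesssim g(\tfrac12|\lambda c|) \leq g(\tfrac12|\xi|)$ and $\lesssim g(\tfrac12|\lambda(c+1)|) \leq g(\tfrac12|\xi|)$ respectively, using that $g$ is non-increasing (Lemma~\ref{g lemma}(i)). Combining this with the $\frac{1}{|\lambda|}$ prefactor, and separately using the trivial bound $|J| \leq \int |\phi(y)y|\,d\nu(y) \cdot g(\tfrac12|\xi|)^{0}$... — actually, to get the product form $g(\tfrac12|\xi|)\cdot\min\{1,N_{j+1}/|\xi|\}$ rather than a sum, I would argue: when $|\xi|\leq N_{j+1}$ the $\min$ equals $1$ and we just need $|J|\lesssim g(\tfrac12|\xi|)$, which follows directly from $|J|\lesssim g(\tfrac12|\xi|)$ via the same $\widehat{\phi\nu}$-identity without invoking the $1/\lambda$ gain; when $|\xi| > N_{j+1}$ the $1/|\lambda| = N_{j+1}/|\xi|$ gain is available and combines with $g(\tfrac12|\xi|)$ as above. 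This yields \eqref{J bound} with $C_0$ absorbing the constant from Lemma~\ref{phi nu decay lemma} and the bound $\int|\phi(y)y|\,d\nu(y)$.

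The bound \eqref{Y bound} is then immediate: by definition \eqref{Y defn}, $Y_a(j,\xi)$ is a difference of two averages of terms $J(a,b,j,\xi)$, and each average is bounded by the same quantity that bounds a single $J$ (convexity of the bound, which does not depend on $b$), so $|Y_a(j,\xi)| \leq 2 C_0 g(\tfrac12|\xi|)\min\{1,N_{j+1}/|\xi|\}$. This mirrors exactly how \eqref{X bound} follows from \eqref{I bound} in Lemma~\ref{I X lemma}.

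The main obstacle is bookkeeping rather than a genuine difficulty: correctly tracking the rescaling $\lambda = \xi/N_{j+1}$ and the shift $c = (a+b)N_{j+1}$, checking that $|\lambda c|\gtrsim|\xi|$ so that the non-increasing $g$ can be evaluated at $\tfrac12|\xi|$, and handling the $\lambda$-near-zero regime to get a clean product bound. The one substantive input is Lemma~\ref{phi nu decay lemma}, which packages the convolution estimate $\widehat{\phi\nu} = \widehat\phi * \widehat\nu$; everything else is the $x$-integration producing the $\min$-factor and the uniform bound $\sup_{y\in\supp(\nu)}|y| < \infty$ coming from compactness of $\supp(\nu)$.
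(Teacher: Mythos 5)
Your treatment of the regime $|\xi| > N_{j+1}$ matches the paper exactly: insert $\phi(y)y = 1$ on $\supp(\nu)$, integrate in $x$, recognize $\widehat{\phi\nu}$, invoke Lemma~\ref{phi nu decay lemma}, and note $|\lambda c|, |\lambda(c+1)| \geq |\xi|$ together with monotonicity of $g$. That gives $|J| \lesssim \frac{N_{j+1}}{|\xi|} g(\tfrac{1}{2}|\xi|)$. (As an aside, the typo in the statement is simply $x \to \xi$; your argument already lands on $g(\tfrac{1}{2}|\xi|)$, so there was no need to re-read it as $g(\tfrac{1}{2}|\lambda|)$.) The passage from \eqref{J bound} to \eqref{Y bound} via the triangle inequality and Lemma \ref{mean zero}'s structure is also correct.

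However, there is a genuine gap in the regime $|\xi| \leq N_{j+1}$. You write that the bound $|J| \lesssim g(\tfrac{1}{2}|\xi|)$ ``follows directly from $|J|\lesssim g(\tfrac12|\xi|)$ via the same $\widehat{\phi\nu}$-identity without invoking the $1/\lambda$ gain,'' which is circular as stated, and more to the point the $\widehat{\phi\nu}$ identity $J = \frac{1}{2\pi i\lambda}\bigl(\widehat{\phi\nu}(\lambda c) - \widehat{\phi\nu}(\lambda(c+1))\bigr)$ always carries the $1/\lambda$ factor --- you cannot opt out of it. Your ``trivial bound'' $|J| \leq \int|\phi y|\,d\nu$ only gives $|J| \lesssim 1$, which does not yield the $g$-factor. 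What you need here is a \emph{separate} estimate, and the paper supplies it by the opposite order of integration: integrate $y$ first in \eqref{J defn} \emph{without} inserting $\phi$, obtaining
\begin{align*}
J(a,b,j,\xi) = \int_{[0,1]} \widehat{\nu}\bigl((\xi/N_{j+1})(aN_{j+1}+bN_{j+1}+x)\bigr)\, dx,
\end{align*}
then observe that $aN_{j+1}+bN_{j+1}+x \geq N_{j+1}$ (since $a \geq 1$, $b \geq 0$, $x \geq 0$), so the argument of $\widehat{\nu}$ has modulus at least $|\xi|$, and Lemma~\ref{g lemma}(i),(ii) give $|J| \leq g(|\xi|) \leq g(\tfrac{1}{2}|\xi|)$. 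Combining this with your $\widehat{\phi\nu}$ bound then yields the full product estimate $|J| \lesssim g(\tfrac{1}{2}|\xi|)\min\{1, N_{j+1}/|\xi|\}$. So your plan needs this one additional $y$-first computation to close the small-$|\xi|$ case; with it inserted, the proof is complete and is essentially the paper's.
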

\begin{proof}
Note \eqref{Y bound} is immediate from \eqref{J bound}. 
Integrating $y$ in \eqref{J defn} shows that 
\begin{align*}
J(a,b,j,\xi) = \int_{[0,1]} \widehat{\nu}((\xi / N_{j+1})(aN_{j+1}+bN_{j+1}+x)) dx  
\end{align*}
Since $|(\xi / N_{j+1})(aN_{j+1}+bN_{j+1}+x)| \geq |\xi|$ for each $x \in [0,1]$, (a) and (b) of Lemma \ref{g lemma} give  
\begin{align*}
|J(a,b,j,\xi)| \leq g(|\xi|).
\end{align*}
On the other hand, integrating $x$ in \eqref{J defn} shows that 
\begin{align*}
J(a,b,j,\xi) = \int_{\RR}  e^{-2 \pi i (\xi / N_{j+1})(aN_{j+1}+bN_{j+1})y} \frac{e^{-2 \pi i (\xi / N_{j+1}) y} - 1}{-2 \pi i (\xi / N_{j+1}) y}  d\nu(y). 
\end{align*}
After multiplying by $-2 \pi i (\xi / N_{j+1})$, using that $\phi(y) = 1/y$ for all $y \in \supp(\nu)$, and integrating $y$,  
we see that $-2 \pi i (\xi / N_{j+1}) J(a,b,j,\xi)$ is  
$$
= \widehat{\phi \nu}((\xi / N_{j+1})(aN_{j+1}+bN_{j+1}+1)) - \widehat{\phi \nu}((\xi / N_{j+1})(aN_{j+1}+bN_{j+1})). 
$$
Since $|(\xi / N_{j+1})(aN_{j+1}+bN_{j+1}+x)| \geq |\xi|$ for each $x \in [0,1]$, Lemma \ref{phi nu decay lemma} and (a) of Lemma \ref{g lemma} give 
\begin{align*}
|J(a,b,j,\xi)| \lesssim \frac{1}{\pi} \left(\frac{N_{j+1}}{|\xi|}\right) g\left(\frac{1}{2}|\xi|\right). 
\end{align*}
\end{proof}

We need the following fact about averages over random subsets. 

\begin{lemma}\label{mean zero}
Let $t \leq n$ be positive integers. Let $A$ be a finite set of size $n$, and let $F: A \to \CC$. Let $\mathcal{B}_t$ be the collection of all size $t$ subsets of $A$, and let $B$ be a set chosen uniformly at random from $\mathcal{B}_t$. 
Then 
\begin{align*}
\EE\rbr{  \frac{1}{t} \sum_{x \in B} F(x)   } = \frac{1}{n} \sum_{x \in A} F(x). 
\end{align*}
\end{lemma}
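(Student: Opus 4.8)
\textbf{Proof plan for Lemma \ref{mean zero}.}

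The plan is to compute the expectation by linearity and exploit the symmetry of the uniform distribution on $\mathcal{B}_t$. First I would write
\begin{align*}
\EE\rbr{ \frac{1}{t} \sum_{x \in B} F(x) } = \frac{1}{t} \sum_{x \in A} F(x) \, \PP(x \in B),
\end{align*}
which is valid since $\sum_{x \in B} F(x) = \sum_{x \in A} F(x) \one_{\{x \in B\}}$ and expectation is linear over the finite sum. The key point is then that $\PP(x \in B)$ does not depend on $x$: for each fixed $x \in A$, the number of size-$t$ subsets of $A$ containing $x$ is $\binom{n-1}{t-1}$, while $|\mathcal{B}_t| = \binom{n}{t}$, so $\PP(x \in B) = \binom{n-1}{t-1}/\binom{n}{t} = t/n$. (Alternatively, one can note that $\sum_{x \in A} \PP(x \in B) = \EE(|B|) = t$ and that each term is equal by the permutation-symmetry of the uniform measure, giving $\PP(x \in B) = t/n$ directly without evaluating binomial coefficients.) Substituting this gives $\EE( \frac{1}{t} \sum_{x \in B} F(x) ) = \frac{1}{t} \cdot \frac{t}{n} \sum_{x \in A} F(x) = \frac{1}{n} \sum_{x \in A} F(x)$, as claimed.

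There is no real obstacle here; the only thing to be careful about is keeping the exchange of sum and expectation honest (trivial, since everything is a finite sum of bounded quantities) and confirming the combinatorial identity $\binom{n-1}{t-1}/\binom{n}{t} = t/n$, which is immediate from $\binom{n}{t} = \frac{n}{t}\binom{n-1}{t-1}$. The symmetry argument is perhaps the cleanest to write, since it avoids binomial coefficients entirely and makes transparent why the result is just the global average of $F$ over $A$.
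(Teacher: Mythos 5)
Your proof is correct and is essentially the same argument as the paper's: both reduce to the count that each $x \in A$ lies in $\binom{n-1}{t-1}$ of the $\binom{n}{t}$ sets in $\mathcal{B}_t$, the paper phrasing this as a double-counting swap of sums and you phrasing it via linearity of expectation and $\PP(x \in B) = t/n$. The symmetry variant you mention is a minor cosmetic alternative, not a different route.
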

\begin{proof}
There are $\binom{n}{t}$ sets in $\mathcal{B}_t$. For each $x \in A$, there are $\binom{n-1}{t-1}$ sets in $\mathcal{B}_t$ that contain $x$. Therefore 
\begin{align*}
\EE\rbr{  \frac{1}{t} \sum_{x \in B} F(x)   } = \frac{1}{{\textstyle \binom{n}{t}} \cdot t } \sum_{B \in \mathcal{B}_t}  \sum_{x \in B} F(x) = \frac{1}{{\textstyle \binom{n}{t}} \cdot t } \sum_{x \in A} \binom{n-1}{t-1} F(x) = \frac{1}{n} \sum_{x \in A} F(x). 
\end{align*}
\end{proof}

We need the following version of Hoeffding's inequality for complex-valued random variables. 

\begin{lemma}\label{bern}
Suppose $Z_1,\ldots,Z_t$ are independent complex-valued random variables satisfying $\EE(Z_i)=0$ and $|Z_i| \leq c$ for $i=1,\ldots,t$, where $c$ is a positive constant. For all $u > 0$,  
\begin{align*}
\PP\rbr{ \abs{\frac{1}{t} \sum_{i=1}^{t} Z_i} \geq c u } \leq 4 \exp\rbr{ -\frac{1}{4} t u^2 }. 
\end{align*}
\end{lemma}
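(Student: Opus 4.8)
\textbf{Proof proposal for Lemma \ref{bern} (complex Hoeffding inequality).}

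The plan is to reduce the complex-valued case to the classical real-valued Hoeffding bound applied separately to real and imaginary parts. Write $Z_i = X_i + \sqrt{-1}\, W_i$ where $X_i = \operatorname{Re}(Z_i)$ and $W_i = \operatorname{Im}(Z_i)$. Since $\EE(Z_i) = 0$, linearity of expectation gives $\EE(X_i) = \EE(W_i) = 0$, and since $|Z_i| \leq c$ we have $|X_i| \leq c$ and $|W_i| \leq c$ as well. The $X_i$ are independent (being functions of the independent $Z_i$), and likewise the $W_i$.

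First I would record the classical Hoeffding inequality for real, independent, mean-zero, bounded variables: if $|X_i| \leq c$ then $\PP\rbr{\abs{\frac{1}{t}\sum_{i=1}^t X_i} \geq s} \leq 2\exp\rbr{-\frac{t s^2}{2c^2}}$. (This is the standard statement; one may also cite it from any probability reference.) Applying this with $s = cu/\sqrt{2}$ to the $X_i$ and to the $W_i$ separately yields
\begin{align*}
\PP\rbr{\abs{\tfrac{1}{t}\textstyle\sum_i X_i} \geq \tfrac{cu}{\sqrt 2}} \leq 2\exp\rbr{-\tfrac{1}{4} t u^2}, \qquad
\PP\rbr{\abs{\tfrac{1}{t}\textstyle\sum_i W_i} \geq \tfrac{cu}{\sqrt 2}} \leq 2\exp\rbr{-\tfrac{1}{4} t u^2}.
\end{align*}

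Then I would combine the two events. If $\abs{\frac{1}{t}\sum_i Z_i} \geq cu$, then since $\abs{\frac{1}{t}\sum_i Z_i}^2 = \abs{\frac{1}{t}\sum_i X_i}^2 + \abs{\frac{1}{t}\sum_i W_i}^2$, at least one of the two real sums must have absolute value at least $cu/\sqrt 2$. Hence the event $\{\abs{\frac{1}{t}\sum_i Z_i} \geq cu\}$ is contained in the union of the two events above, and the union bound gives $\PP\rbr{\abs{\frac{1}{t}\sum_i Z_i} \geq cu} \leq 4\exp\rbr{-\frac{1}{4} t u^2}$, as claimed.

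There is no real obstacle here; the only thing to be slightly careful about is the constant bookkeeping (tracking the factor $\sqrt 2$ so that the exponent comes out as exactly $\frac14 t u^2$ and the prefactor as $4$), and making sure the hypotheses of real Hoeffding—independence, mean zero, uniform bound—are genuinely inherited by the real and imaginary parts, which they are since $\operatorname{Re}$ and $\operatorname{Im}$ are fixed linear functionals.
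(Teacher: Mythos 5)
Your proof is correct and follows exactly the route the paper indicates: apply the real Hoeffding inequality separately to the real and imaginary parts, then combine via a union bound. The paper leaves all the bookkeeping implicit (its proof is a one-liner), whereas you have spelled out the $\sqrt{2}$ split and verified that the constants assemble to $4\exp(-\tfrac14 t u^2)$; the computation checks out.
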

\begin{proof}
Apply the standard Hoeffding inequality to the real and imaginary parts of $Z_1,\ldots,Z_t$. 
\end{proof}

\begin{lemma}\label{choice tele lemma}
Define $d_0$ by 
$
1/d_0 = \max\cbr{  \diam\rbr{\supp(\mu)},    \diam\rbr{\supp(\mu \cdot \nu)}   }. 
$
Fix a real number $\zeta_0$ such that 
\begin{align*}
\zeta_0 > \sum_{k \in \ZZ} \frac{2}{1+d_0^2|k|^2}.
\end{align*}
It is possible to choose the sets $B_{j+1,a}$ such that, 
for every $j \in \ZZ_{\geq 0}$ 
and every $\xi \in d_0 \ZZ = \cbr{d_0 k : k \in \ZZ}$, 
\begin{align}\label{tele ineq mu} 
|\widehat{\mu_{j+1}}(\xi) - \widehat{\mu_j}(\xi)| 
&< 2 T_j^{-1/2} \ln^{1/2}(4\zeta_0(1 + |\xi|^2)) \min\cbr{1,N_{j+1}/|\xi|},  
\\
\label{tele ineq m} 
|\widehat{\mu_{j+1} \nu}(\xi) - \widehat{\mu_j \nu}(\xi)| 
&< 2 C_0 T_j^{-1/2} \ln^{1/2}(4\zeta_0(1 + |\xi|^2)) g(\tfrac{1}{2}|\xi|) \min\cbr{1,N_{j+1}/|\xi|}, 
\end{align} 
where $C_0$ is the constant from Lemma \ref{J Y lemma}. 
\end{lemma}
\begin{proof}
Fix $j \in \ZZ_{\geq 0}$ and assume a set $A_j \subseteq [1,2)$ satisfying $|A_j|=T_j$ is given. 
To simplify notation in what follows, we write $N=N_{j+1}$, $n=n_{j+1}$, and $t = t_{j+1}$.  
For each $a \in A_j$, suppose we choose $B_{j+1,a}$ independently and uniformly at random from the collection of all size $t$ subsets of $N^{-1}[n]$. 
Now fix $\xi \in \RR$. 
Then 
$\cbr{X_a(j,\xi): a \in A_j }$ is a set of independent complex-valued random variables, 
and the same is true of $\cbr{Y_a(j,\xi): a \in A_j }$. 
%
Moreover, for each $a \in A_j$, we find that $\EE(X_a(j,\xi))=0$ by applying Lemma \ref{mean zero} with $F(b) = I(a,b,j,\xi)$ for $b \in N^{-1}[n]$. Likewise, we find that $\EE(Y_a(j,\xi))=0$ by applying Lemma \ref{mean zero} with $F(b) = J(a,b,j,\xi)$.
We also have the bounds on $|X_a(j,\xi)|$ and $|Y_a(j,\xi)|$ from Lemma \ref{I X lemma} and Lemma \ref{J Y lemma}, respectively. 
%
%
Define 
\begin{align*}
u_{j,\xi} &= \sqrt{ T_j^{-1} \ln\rbr{4\zeta_0(1+|\xi|^2)} }
\end{align*}
Let $E^1(\xi)$ be the event that $\abs{\frac{1}{T_j} \sum_{a \in A_j} X_a(j,\xi)} < 2 u_{j,\xi} \min\cbr{1,N/|\xi|}$. 
Let $E^2(\xi)$ be the event that $\abs{\frac{1}{T_j} \sum_{a \in A_j} Y_a(j,\xi)} < 2 C_0 u_{j,\xi} g(\frac{1}{2}|\xi|) \min\cbr{1,N/|\xi|}$. 
For each $\xi \in \RR$, Lemma \ref{bern} implies  
\begin{align*}
\PP\rbr{(E^i(\xi))^c} &\leq 4 \exp\rbr{-\frac{1}{4} T_j u_{j,\xi}^2} = \frac{1}{\zeta_0(1+|\xi|^2)}.  
\end{align*}
Therefore, by De Morgan's laws and countable subaddivity,
 the probability that both $E^1(d_0 k)$ and $E^2(d_0 k)$ hold for all $k \in \ZZ$ is 
 $$
 \geq 1 - \sum_{k \in \ZZ} \rbr{\PP\rbr{(E^1(d_0 k))^c} + \PP\rbr{(E^2(d_0 k))^c}} 
\geq 1 - \sum_{k \in \ZZ} \frac{2}{\zeta_0(1+d_0^2|k|^2)} 
> 0.  
 $$
In light of \eqref{mu equation} and \eqref{m equation}, this implies there is some choice of the sets $B_{j+1,a}$ $(a \in A_j)$ such that \eqref{tele ineq mu} and \eqref{tele ineq m} hold for every $\xi \in d_0 \ZZ$. 
\end{proof}

\begin{lemma}\label{mu mu nu decay lemma}
With the sets $B_{j+1,a}$ chosen as in Lemma \ref{choice tele lemma}, 
\begin{align}
\label{mu decay}
|\widehat{\mu}(\xi)| &\lesssim (1+|\xi|)^{-\alpha/2} \ln^{1/2}(4\zeta_0(1+|\xi|^2)) \quad \forall \xi \in \RR,  
\\
\label{m decay}
|\widehat{\mu \cdot \nu}(\xi)| &\lesssim (1+|\xi|)^{-\alpha/2} g\left(\frac{1}{2}|\xi|\right) \ln^{1/2}(4\zeta_0(1+|\xi|^2)) \quad \forall \xi \in \RR. 
\end{align}.
\end{lemma}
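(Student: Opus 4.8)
The plan is to derive both decay estimates from the telescoping-sum estimates in Lemma \ref{choice tele lemma} together with the trivial observation that $\widehat{\mu_0}$ and $\widehat{\mu_0 \cdot \nu}$ are bounded (indeed, Schwartz, since $\mu_0$ is a normalized indicator of an interval and $\nu$ is compactly supported). First I would record that, because the sequences $(\mu_j)$ and $(\mu_j \cdot \nu)$ converge weakly to $\mu$ and $\mu \cdot \nu$ respectively, and because all these measures are supported in a fixed compact set, their Fourier transforms converge pointwise; hence for every $\xi$ we may write the telescoping identities
\begin{align*}
\widehat{\mu}(\xi) = \widehat{\mu_0}(\xi) + \sum_{j=0}^{\infty} \rbr{\widehat{\mu_{j+1}}(\xi) - \widehat{\mu_j}(\xi)}, \qquad
\widehat{\mu \cdot \nu}(\xi) = \widehat{\mu_0 \cdot \nu}(\xi) + \sum_{j=0}^{\infty} \rbr{\widehat{\mu_{j+1} \cdot \nu}(\xi) - \widehat{\mu_j \cdot \nu}(\xi)},
\end{align*}
with absolute convergence guaranteed a posteriori by the bounds below.

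The core of the argument is a dyadic-type splitting of the sum over $j$ according to whether $N_{j+1} \leq |\xi|$ or $N_{j+1} > |\xi|$. For $\xi \in d_0\ZZ$, substitute \eqref{tele ineq mu}: using $T_j \approx N_j^{\alpha}$ and $2 \leq n_j \leq n_\ast$ (so $N_{j+1} \approx N_j$ up to the fixed factor $n_\ast$), the terms with $N_{j+1} \leq |\xi|$ contribute a geometric-type sum $\sum_{N_j \lesssim |\xi|} N_j^{-\alpha/2}$, which is dominated by its largest term, of size $\approx |\xi|^{-\alpha/2}$; the terms with $N_{j+1} > |\xi|$ carry the factor $N_{j+1}/|\xi|$, giving $\sum_{N_j \gtrsim |\xi|} N_j^{-\alpha/2} (N_j/|\xi|)^{-1} = |\xi|^{-1}\sum_{N_j \gtrsim |\xi|} N_j^{-\alpha/2 }\cdot N_j \cdot N_j^{-1}$ — more carefully, $T_j^{-1/2}\min\{1,N_{j+1}/|\xi|\} \approx N_j^{-\alpha/2}(N_j/|\xi|)$ when $N_j \gtrsim |\xi|$, wait, that grows; the correct reading is that on this range $\min\{1, N_{j+1}/|\xi|\} = 1$, so one instead splits at $N_{j+1} \gtrless |\xi|$ with the factor $\min\{1,N_{j+1}/|\xi|\} = N_{j+1}/|\xi|$ active precisely when $N_{j+1} \leq |\xi|$. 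So: for $N_{j+1} \leq |\xi|$, the term is $\lesssim N_j^{-\alpha/2}\cdot N_{j+1}/|\xi| \approx N_{j+1}^{1-\alpha/2}/|\xi|$, and since $\alpha \leq 1$ these increase in $j$ and sum to $\lesssim |\xi|^{1-\alpha/2}/|\xi| = |\xi|^{-\alpha/2}$; for $N_{j+1} > |\xi|$, the term is $\lesssim N_j^{-\alpha/2}$, which decreases geometrically in $j$ and sums to $\lesssim |\xi|^{-\alpha/2}$. The logarithmic factor $\ln^{1/2}(4\zeta_0(1+|\xi|^2))$ is monotone and may be pulled outside both sums, yielding \eqref{mu decay} for $\xi \in d_0\ZZ$. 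The estimate \eqref{m decay} follows identically from \eqref{tele ineq m}, carrying the extra non-increasing factor $g(\tfrac12|\xi|)$ through the same splitting. Finally, to pass from $\xi \in d_0\ZZ$ to all $\xi \in \RR$, I would invoke that $\widehat{\mu}$ and $\widehat{\mu\cdot\nu}$ are uniformly Lipschitz (in fact $\lesssim 1$-Lipschitz after scaling, since the measures are supported in a set of diameter $d_0^{-1}$), so the values on the lattice $d_0\ZZ$ control all values up to a bounded additive error, which is absorbed into the stated right-hand sides once $|\xi|$ is bounded away from $0$ — and for bounded $|\xi|$ the estimates are trivial since the left sides are $\leq 1$.

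The main obstacle I anticipate is getting the two-range geometric-series bookkeeping exactly right: one must be careful that $T_j \approx N_j^\alpha$ holds with uniform implied constants, that $N_{j+1}/N_j \in [2, n_\ast]$ makes the "dyadic" blocks genuinely geometric, and that the boundary terms where $N_{j+1} \approx |\xi|$ are counted only a bounded number of times. A secondary technical point is justifying the interchange of limit and sum (equivalently, the pointwise convergence $\widehat{\mu_j} \to \widehat{\mu}$), which is immediate from weak convergence plus uniform compact support but should be stated. Everything else is routine.
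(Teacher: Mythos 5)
Your high-level plan matches the paper's: telescope $\widehat{\mu}$ (resp.\ $\widehat{\mu\cdot\nu}$) from the base term $\widehat{\mu_0}$ (resp.\ $\widehat{\mu_0\cdot\nu}$), feed in the estimates of Lemma \ref{choice tele lemma}, split the sum over $j$ at $N_{j+1}\lessgtr|\xi|$, and bound both ranges by geometric series using $T_j\approx N_j^\alpha\approx N_{j+1}^\alpha$ and $N_{j+1}/N_j\in[2,n_\ast]$. That bookkeeping is exactly the paper's, and your self-correction about which range carries the factor $N_{j+1}/|\xi|$ lands in the right place. However, there are two genuine gaps.

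First, the treatment of the base terms is wrong. Neither $\widehat{\mu_0}$ nor $\widehat{\mu_0\cdot\nu}$ is Schwartz: $\mu_0$ is the normalized indicator of $[1,2]$, so $\widehat{\mu_0}(\xi)$ decays only like $|\xi|^{-1}$ (a sinc-type transform, not rapidly decreasing), and $\mu_0\cdot\nu$ is a compactly supported measure whose Fourier transform need not be smooth at all. More to the point, ``bounded'' is not sufficient for your argument: the right-hand sides of \eqref{mu decay} and \eqref{m decay} tend to zero, so a merely bounded base term cannot be absorbed. What is actually needed (and is true) is the decay $|\widehat{\mu_0\cdot\nu}(\xi)|\lesssim(1+|\xi|)^{-1}g(\tfrac12|\xi|)$; since $\alpha\leq 1$ and $g(\tfrac12|\xi|)\gtrsim(1+|\xi|)^{-1/2}$ this dominates the target. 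The paper establishes it by a short explicit computation: integrate the inner variable first, factor out $1/y$ via the function $\phi$ with $\phi(y)=1/y$ on $\supp(\nu)$, and apply Lemma \ref{phi nu decay lemma}. You would need to supply something equivalent rather than invoke a Schwartz property that fails.

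Second, the passage from the lattice $d_0\ZZ$ to all of $\RR$ cannot be carried out by the Lipschitz argument you describe. The Lipschitz constant of $\widehat{\mu}$ is of order $\diam(\supp\mu)\approx d_0^{-1}$, and consecutive lattice points are spaced $d_0$ apart, so the Lipschitz bound only controls the oscillation between lattice points by an $O(1)$ amount. An $O(1)$ additive error cannot be absorbed into $(1+|\xi|)^{-\alpha/2}\ln^{1/2}(4\zeta_0(1+|\xi|^2))$, which decays. The correct extension is the argument the paper attributes to Kahane (pp.\ 252--253 of his book): for a probability measure supported in an interval of length $d_0^{-1}$, one expands $\widehat{\mu}(\xi)$ as a convergent linear combination $\sum_m c_m(\theta)\,\widehat{\mu}(d_0 k_0 - d_0 m)$ of the lattice values (a sampling/interpolation identity coming from expanding $e^{-2\pi i x(\xi-d_0 k_0)}$ in a Fourier series on $\supp(\mu)$), and then uses the decay of both the lattice values and the coefficients $c_m$. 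This is genuinely different from, and stronger than, Lipschitz continuity, and it is the step your proposal is missing.
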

\begin{proof}
We prove only \eqref{m decay}, as the proof of \eqref{mu decay} is similar and simpler. We begin by making two reductions. 

First, by a standard argument (see Kahane \cite[pp.252-253]{kahane-book}), 
we only need to prove \eqref{m decay} for $\xi = d_0 k \in d_0 \ZZ$.  
For the second reduction, note that for every $0 \neq \xi \in \RR$, we have 
\begin{align*}
\widehat{{\mu_{0} \cdot \nu}}(\xi)  
&= \int_{\RR} \int_{1}^{2} e^{-2\pi i x y \xi} dx d\nu(y) 
=  \int_{\RR} \frac{e^{-2\pi i y \xi} - e^{-2\pi i y (2\xi)}}{-2 \pi i y \xi} d\nu(y) 
\\
&= \int_{\RR} \frac{e^{-2\pi i y \xi} - e^{-2\pi i y (2\xi)}}{-2 \pi i \xi} \phi(y) d\nu(y) 
= \frac{1}{-2 \pi i \xi} \rbr{ \widehat{\phi \nu}(\xi) - \widehat{\phi \nu}(2\xi) }. 
\end{align*}
By Lemma \ref{phi nu decay lemma} and (a) of Lemma \ref{g lemma}, $|\widehat{{\mu_{0} \cdot \nu}}(\xi)| \lesssim (1+|\xi|)^{-1} g\left(\frac{1}{2}|\xi|\right)$ for all $0 \neq \xi \in \RR$. The same inequality holds when $\xi=0$ by direct calculation. Therefore, by the triangle inequality, we just need to prove \eqref{m decay} with the left-hand side replaced by $|\widehat{\mu \cdot \nu}(\xi) - \widehat{{\mu_{0} \cdot \nu}}(\xi)|$. 

Lemma \ref{weak convergence} says $\mu_j \to \mu$ weakly. 
Then the dominated convergence theorem shows that ${\mu_{j} \cdot \nu} \to {\mu \cdot \nu}$ weakly. 
Therefore, for every $\xi \in \RR$, $\widehat{\mu \cdot \nu}(\xi) = \lim_{j \to \infty} \widehat{\mu_{j} \cdot \nu}(\xi)$, hence 
\begin{align}\label{sum above}
|\widehat{\mu \cdot \nu}(\xi) - \widehat{\mu_{0} \cdot \nu}(\xi)| \leq \sum_{j=0}^{\infty} |\widehat{\mu_{j+1} \cdot \nu}(\xi) - \widehat{\mu_{j} \cdot \nu}(\xi)|.  
\end{align} 
If $\xi = 0$, each term of the sum 
in \eqref{sum above} 
is zero, by direct calculation. 
Now assume $\xi = d_0 k \in d_0 \ZZ$, $\xi \neq 0$. 
By Lemma \ref{choice tele lemma}, the sum 
in \eqref{sum above} 
is 
\begin{align*}
&\leq 2 C_0 g(\tfrac{1}{2}|\xi|) \ln^{1/2}(4\zeta_0(1+|\xi|^2)) 
\rbr{ 
\sum_{j: N_{j+1} > |\xi|} T_j^{-1/2} 
+ \sum_{j: N_{j+1} \leq |\xi|}  T_j^{-1/2} \frac{N_{j+1}}{|\xi|}
}.
\end{align*}

To estimate the last two sums, recall that $2 \leq n_{j} \leq n_{\ast}$, $N_j = n_1 \cdots n_j$, $N_0=T_0=1$, and $T_{j} \approx N_{j}^{\alpha}$ for all $j \in \ZZ_{> 0}$. 
Thus the first sum is 
\begin{align*}
\approx |\xi|^{-\alpha/2} \sum_{j: N_{j+1} > |\xi|} (N_{j+1}/|\xi|)^{-\alpha/2} 
\leq  |\xi|^{-\alpha/2} \sum_{k=0}^{\infty} 2^{-k \alpha/2} 
\lesssim |\xi|^{-\alpha/2}. 
\end{align*}
And the second sum is 
\begin{align*}
\approx |\xi|^{-\alpha/2}  \sum_{j: N_{j+1} \leq |\xi|} (N_{j+1}/|\xi|)^{1-\alpha/2} 
\leq |\xi|^{-\alpha/2} \sum_{k=0}^{\infty} 2^{-k(1-\alpha/2)}
\lesssim |\xi|^{-\alpha/2}. 
\end{align*}
\end{proof}

\begin{lemma}
With the sets $B_{j+1,a}$ chosen as in Lemma \ref{choice tele lemma}, $\dim_F(\mu) \geq \alpha$ and $\dim_F(\mu \cdot \nu) \geq \min\cbr{1,\dim_F(\mu) + \dim_F(\nu)}$. 
\end{lemma}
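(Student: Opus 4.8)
The plan is to extract the dimension statements directly from the pointwise Fourier decay estimates in Lemma \ref{mu mu nu decay lemma}, together with the Hausdorff dimension computation already in hand from Lemma \ref{Hausdorff mu}. The key observation is that a bound of the form $|\widehat{\rho}(\xi)| \lesssim (1+|\xi|)^{-\beta/2} \ln^{1/2}(4\zeta_0(1+|\xi|^2))$ implies $\sup_\xi |\widehat{\rho}(\xi)|^2 (1+|\xi|)^{\gamma} < \infty$ for every $\gamma < \beta$, because the logarithmic factor is absorbed by the arbitrarily small gap between $\gamma$ and $\beta$; hence $\dim_F(\rho) \geq \beta$. So \eqref{mu decay} gives $\dim_F(\mu) \geq \alpha$. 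Combined with $\dim_F(\mu) \leq \dim_H(\mu) = \alpha$ (the general inequality from Section \ref{dimensions} and Lemma \ref{Hausdorff mu}), we conclude $\dim_F(\mu) = \alpha$, which also forces $\dim_H(\mu) = \dim_F(\mu) = \alpha$ as required by Theorem \ref{non-uniform thm measures}.

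For the product estimate, I would first record that $\dim_F(\nu) \leq 1$ trivially (Fourier dimension of a subset of $\RR$), so $\min\cbr{1,\dim_F(\mu)+\dim_F(\nu)} = \min\cbr{1,\alpha+\dim_F(\nu)}$. Fix any $\beta < \dim_F(\nu)$; by definition of $\dim_F(\nu)$ and part (iii) of Lemma \ref{g lemma}, we have $\sup_{\xi} g(|\xi|)(1+|\xi|)^{\beta/2} < \infty$, equivalently $g(\tfrac12|\xi|) \lesssim (1+|\xi|)^{-\beta/2}$. Feeding this into \eqref{m decay} yields
\begin{align*}
|\widehat{\mu \cdot \nu}(\xi)| \lesssim (1+|\xi|)^{-(\alpha+\beta)/2} \ln^{1/2}(4\zeta_0(1+|\xi|^2)),
\end{align*}
and the same logarithm-absorption argument gives $\dim_F(\mu \cdot \nu) \geq \alpha + \beta$. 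On the other hand, if $\dim_F(\nu)$ is large (or $\alpha$ close to $1$) the exponent $(\alpha+\beta)/2$ could exceed $1/2$; but $\dim_F(\mu\cdot\nu)\le 1$ always, and in fact \eqref{m decay} combined with $g$ bounded still yields decay of order $(1+|\xi|)^{-\alpha/2}\ln^{1/2}(\cdots)$, so we always get $\dim_F(\mu\cdot\nu)\ge\alpha$; taking the better of the two lower bounds and letting $\beta \uparrow \dim_F(\nu)$ gives $\dim_F(\mu \cdot \nu) \geq \min\cbr{1, \alpha + \dim_F(\nu)} = \min\cbr{1,\dim_F(\mu)+\dim_F(\nu)}$.

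I do not expect a serious obstacle here: this lemma is purely a bookkeeping step that converts the quantitative decay of Lemmas \ref{Hausdorff mu} and \ref{mu mu nu decay lemma} into the language of Hausdorff and Fourier dimension. The only mildly delicate point is making sure the logarithmic factor in \eqref{mu decay} and \eqref{m decay} does not prevent the Fourier dimension bound from reaching the full value $\alpha$ (resp.\ $\alpha+\beta$) rather than merely every value strictly below it — but since $\dim_F$ is defined as a supremum over exponents $s$ with $\sup_\xi|\widehat{\rho}(\xi)|^2|\xi|^s<\infty$, and $(1+|\xi|)^{-s}\ln(4\zeta_0(1+|\xi|^2)) \to 0$ for every $s>0$, we get exactly $\dim_F(\rho)\ge\beta$ from a decay bound with exponent $\beta/2$ and a logarithmic loss. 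One should also note that the case $\alpha = 0$ was already disposed of at the start of the construction (point mass), so we may assume $\alpha \in (0,1]$ throughout, and that $\supp(\mu)\subseteq[1,2]$ is immediate from $A_j\subseteq[1,2)$ and Lemma \ref{support lemma}. Assembling these observations completes the proof of Theorem \ref{non-uniform thm measures}, and hence of Theorem \ref{non-uniform thm sets}.
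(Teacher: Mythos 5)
Your proposal is correct and takes essentially the same approach as the paper's own (two-line) proof, which simply cites Lemma \ref{mu mu nu decay lemma}, Lemma \ref{Hausdorff mu}, and Lemma \ref{g lemma}(iii). You have just written out the bookkeeping the paper leaves implicit: the logarithm-absorption observation that decay of exponent $\beta/2$ with a $\ln^{1/2}$ loss still gives $\dim_F \geq \beta$ (since $\dim_F$ is a supremum), and the limiting step $\beta \uparrow \dim_F(\nu)$ after invoking Lemma \ref{g lemma}(iii).
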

\begin{proof}
Lemma \ref{mu mu nu decay lemma} 
implies $\dim_{F}(\mu) \geq \alpha$.  
Lemma \ref{mu mu nu decay lemma} and (c) of Lemma \ref{g lemma} imply $\dim_F(\mu \cdot \nu) \geq \dim_F(\mu) + \dim_F(\nu)$. 
\end{proof}

\bibliographystyle{amsplain}
\bibliography{bib_file}

\providecommand{\bysame}{\leavevmode\hbox to3em{\hrulefill}\thinspace}
\providecommand{\MR}{\relax\ifhmode\unskip\space\fi MR }
\providecommand{\MRhref}[2]{%
  \href{http://www.ams.org/mathscinet-getitem?mr=#1}{#2}
}
\providecommand{\href}[2]{#2}
\begin{thebibliography}{10}

\bibitem{bourgain-averages}
J.~Bourgain, \emph{Averages in the plane over convex curves and maximal
  operators}, J. Analyse Math. \textbf{47} (1986), 69--85. \MR{874045}

\bibitem{bourgain-2010}
\bysame, \emph{The discretized sum-product and projection theorems}, J. Anal.
  Math. \textbf{112} (2010), 193--236. \MR{2763000}

\bibitem{chen}
X.~Chen, \emph{Sets of {S}alem type and sharpness of the {$L^2$}-{F}ourier
  restriction theorem}, Trans. Amer. Math. Soc. \textbf{368} (2016), no.~3,
  1959--1977. \MR{3449230}

\bibitem{falconer-book-fractal-geometry}
Kenneth Falconer, \emph{Fractal geometry}, third ed., John Wiley \& Sons, Ltd.,
  Chichester, 2014, Mathematical foundations and applications. \MR{3236784}

\bibitem{hambrook-thesis}
K.~Hambrook, \emph{Restriction theorems and {S}alem sets}, Ph.D. thesis,
  University of British Columbia, 2015.

\bibitem{HL2013}
K.~Hambrook and I.~{\L}aba, \emph{On the sharpness of {M}ockenhaupt's
  restriction theorem}, Geom. Funct. Anal. \textbf{23} (2013), no.~4,
  1262--1277. \MR{3077913}

\bibitem{HL2016}
Kyle Hambrook and Izabella {\L}aba, \emph{Sharpness of the
  {M}ockenhaupt-{M}itsis-{B}ak-{S}eeger restriction theorem in higher
  dimensions}, Bull. Lond. Math. Soc. \textbf{48} (2016), no.~5, 757--770.
  \MR{3556359}

\bibitem{alex}
A.~Iosevich, personal communication.

\bibitem{kahane-book}
J.-P. Kahane, \emph{Some random series of functions}, second ed., Cambridge
  Studies in Advanced Mathematics, vol.~5, Cambridge University Press,
  Cambridge, 1985. \MR{833073}

\bibitem{Maximal}
B.~Krause, A.~Iosevich, E.~Sawyer, K.~Taylor, and I.~Uriarte-Tuero,
  \emph{Maximal operators: scales, curvature and the fractal dimension}, to
  appear in Analysis Mathematica, 2018.

\bibitem{LP}
Izabella {\L}aba and Malabika Pramanik, \emph{Arithmetic progressions in sets
  of fractional dimension}, Geom. Funct. Anal. \textbf{19} (2009), no.~2,
  429--456. \MR{2545245}

\bibitem{marstrand-packing}
J.~M. Marstrand, \emph{Packing circles in the plane}, Proc. London Math. Soc.
  (3) \textbf{55} (1987), no.~1, 37--58. \MR{887283}

\bibitem{mattila-1987}
P.~Mattila, \emph{Spherical averages of {F}ourier transforms of measures with
  finite energy; dimension of intersections and distance sets}, Mathematika
  \textbf{34} (1987), no.~2, 207--228. \MR{933500}

\bibitem{Mat95}
\bysame, \emph{Geometry of sets and measures in {E}uclidean spaces}, Cambridge
  Studies in Advanced Mathematics, vol.~44, Cambridge University Press,
  Cambridge, 1995, Fractals and rectifiability. \MR{1333890}

\bibitem{Mat15}
\bysame, \emph{Fourier analysis and {H}ausdorff dimension}, Cambridge Studies
  in Advanced Mathematics, vol. 150, Cambridge University Press, Cambridge,
  2015. \MR{3617376}

\bibitem{mitsis-thesis}
T.~Mitsis, \emph{On a problem in geometric measure theory related to sphere and
  circle packing}, ProQuest LLC, Ann Arbor, MI, 1998, Thesis
  (Ph.D.)--California Institute of Technology. \MR{2697868}

\bibitem{mock}
G.~Mockenhaupt, \emph{Salem sets and restriction properties of {F}ourier
  transforms}, Geom. Funct. Anal. \textbf{10} (2000), no.~6, 1579--1587.
  \MR{1810754}

\bibitem{oberlin-2006}
D.~M. Oberlin, \emph{Packing spheres and fractal {S}trichartz estimates in
  {$\Bbb R^d$} for {$d\geq 3$}}, Proc. Amer. Math. Soc. \textbf{134} (2006),
  no.~11, 3201--3209. \MR{2231903}

\bibitem{oberlin-2007}
\bysame, \emph{Unions of hyperplanes, unions of spheres, and some related
  estimates}, Illinois J. Math. \textbf{51} (2007), no.~4, 1265--1274.
  \MR{2417426}

\bibitem{STmeasure}
K.~Simon and K.~Taylor, \emph{Dimension and measure of sums of planar sets and
  curves}, arXiv:1706.00131, 2017.

\bibitem{STinterior}
\bysame, \emph{Interior of sums of planar sets and curves}, to appear in Proc.
  Cambridge Philos. Soc., 2018.

\bibitem{stein-maximal-spherical-means}
E.~M. Stein, \emph{Maximal functions. {I}. {S}pherical means}, Proc. Nat. Acad.
  Sci. U.S.A. \textbf{73} (1976), no.~7, 2174--2175. \MR{420116}

\bibitem{wolff-1997}
T.~Wolff, \emph{A {K}akeya-type problem for circles}, Amer. J. Math.
  \textbf{119} (1997), no.~5, 985--1026. \MR{1473067}

\bibitem{wolff-smoothing}
\bysame, \emph{Local smoothing type estimates on {$L^p$} for large {$p$}},
  Geom. Funct. Anal. \textbf{10} (2000), no.~5, 1237--1288. \MR{1800068}

\bibitem{wolff-book-2003}
T.~H. Wolff, \emph{Lectures on harmonic analysis}, University Lecture Series,
  vol.~29, American Mathematical Society, Providence, RI, 2003, With a foreword
  by Charles Fefferman and a preface by Izabella \L aba, Edited by \L aba and
  Carol Shubin. \MR{2003254}

\end{thebibliography}

\end{document}